\def\ps@pprintTitle{%
   \let\@oddhead\@empty
   \let\@evenhead\@empty
   \let\@oddfoot\@empty
   \let\@evenfoot\@oddfoot
}
\newtheorem{theorem}{\bf Theorem}[section]
\newtheorem{lemma}{\bf Lemma}[section]
\newtheorem{remark}{\bf Remark}[section]
\def\bba{\begin{align}}
\def\bea{\end{align}}
\def\ba{\begin{align*}}
\def\ea{\end{align*}}
\def\ee{\end{eqnarray*}}
\def\be{\begin{eqnarray*}}
\def\bee{\end{eqnarray}}
\def\bbe{\begin{eqnarray}}
  \def\R{\mathrm{Re}}
   \def\I{\mathrm{Im}}
   \def\i{\mathrm i}
   \def\p{\partial}
   \def\e{\mathrm e}
   \def\n{\bm n}
    \def\e{\mathrm e}
    \def\p{\partial}
    \def\i{\mathrm i}
    \def\f{\bm f}
    \def\u{\bm u}
    \def\T{\mathbb{T}}
    \def\RR{\mathbb{R}}
    \def\Z{\mathbb{Z}}
\def\x{\bm X}%\mathbf x}
\journal{}
\title{Instability of  the Kolmogorov flow in a wall-bounded domain}
\author{Zhi-Min Chen}
\address{School  of Mathematics and Statistics, Shenzhen University,  Shenzhen 518060,  China}
\date{}
\begin{document}

\begin{abstract}
In the   magnetohydrodynamics (MHD) experiment performed by Bondarenko and his co-workers in 1979, the Kolmogorov flow loses stability  and transits  into a  secondary  steady state flow at the Reynolds number $R=O(10^3)$. This problem is modelled  as a MHD flow bounded between lateral walls under slip wall boundary condition. The existence of the secondary steady state flow is now proved.  The theoretical solution has a very good agreement with the flow measured in laboratory experiment at $R=O(10^3)$. Further transition of the secondary flow is observed numerically. Especially, well developed turbulence arises at $R=O(10^4)$.

\end{abstract}
\begin{keyword}
Kolmogorov flow\sep wall-bounded  fluid domain\sep secondary steady-state flows \sep Navier-Stokes equations \sep 2D turbulence

\

{\it Mathematics Subject Classification (2010)}: 35Q35, 76E25, 76E30, 76D05
\end{keyword}

\maketitle

\section{Introduction}

The instability of  a basic flow  has been a  principal driver in numerical  and experimental fluid dynamical studies  since the  Reynolds  pipe flow experiment \cite{1} performed  in 1883. Recently,
the instability examination  has also received significant attention in the field of mathematical fluid mechanics due to pure mathematical investigations (see, for example,  Bedrossian {\it et al.} \cite{2}, Li {\it et al.} \cite{3},  Wei and Zhang \cite{4}) for some idealized basic flows without involving  boundary layers.
Actually,   one of the  best known examples  in such a idealized flow family is probably the Kolmogorov flow
\bbe\u_0 =(\cos y,0) \, \, \mbox{ or } \,\, (\sin y,0), \label{new1}
\bee
a unidirectional  steady state solution  of the two-dimensional incompressible Navier-Stokes equations under spacially periodic boundary condition.
%\bbe \p_t \u +\u\cdot \nabla \u + \nabla p -\nu\Delta \u =\nu\u_0,\,\,\, \nabla \cdot \u=0 \label{new1}\bee for velocity field $\u$, pressure field $p$ and kinematic viscosity  $\nu$.

 This flow was introduced by Kolmogorov (see Arnold and Meshalkin \cite{5}) by suggesting the study on such a simple fluid motion  to understand the transition of Navier-Stokes flows in accordance with the Reynolds number. It was proved by Meshalkin and Sinai \cite{6} that $\u_0$ in the domain $\T\times \T$ for $\T=\RR/(2\pi\Z)$ is linearly  stabile for all $R>0$.
 Iudovich \cite{7} considered bifurcation analysis and linear spectral analysis of (\ref{new1}) in spatially periodic domains $(\frac1{k_x}\T)\times \T$ for $0<k_x <1$ and  derived  the critical Reynolds number $R_c =\sqrt{2}$ for
 (\ref{new1}) in the domain $\RR\times \T$. The numerical approximation of the  bifurcating steady-state solution of \cite{7} was given by Belotserkovskii \cite{N8}. On the other hand, there is a large literature showing Kolmogorov flow in laboratory experiments (see    Batchaev \cite{N9}, Batchaev and Dowzhenko \cite{N10}, Burgess \cite{N12},  Kolesnikov \cite{N13,N14}, Obukhov \cite{N15},  Tithof et al. \cite{N16},  ).

 Especially, in an  MHD laboratory experiment given by
  Bondarenko {\it et al.} \cite{N11},
 a thin layer of electrolyte was placed in  a plane horizontal rectangular  cell bottomed with magnetoelastic rubber, which is served as a magnetic field source and produces a sinusoidal  magnetic field
  \bbe H= H_0\sin py \bee
   perpendicular to the bottom surface of the cell. Here the amplitude strength $H_0=200 \,Oe$ and the magnetic wave number $p=2\pi/(4.4 cm)$.  An electric current  passes  transversally through the electrolyte  from  electrodes mounted on the longitudinal side walls of the cell. The motion of the electrolytic fluid  is driven by the electromagnetic Lorenz force
 \bbe  \f = (\gamma \sin py,0),\,\, \gamma = \frac1{\rho c}jH_0,\bee
 where  $\rho$ is  the density of the fluid, $c$ is the electrodynamic constant and $j$ is the electric current density.

   This three-dimensional  problem   is approximated by   the motion of an infinitesimally thin   electrolytic fluid   by ignoring the vertical motion. The effect of the bottom boundary layer reduces to effective deceleration of the horizontal flow $\u$  in accordance with a linear law
   \bbe \nu \frac{\p^2 \u}{\p z^2}= -\mu \u \bee
  on the free surface of the fluid layer,  where $\nu$ is the kinematic viscosity and $\mu$ is a friction coefficient inversely proportional to the square of the  fluid thickness.

    Thus the dynamic equations for the horizontal current on the free surface of the  electrolytic layer is reduced to the extended two-dimensional incompressible Navier-Stokes equations \cite{N11}
  \bbe
\p_t \u +\u\cdot \nabla \u + \frac1\rho \nabla P -\nu \Delta \u+\mu  \u  =\gamma (\cos py,0),\,\,\, \nabla \cdot \u=0. \label{new20}
\bee
Defining the  Reynolds number
\bbe R= \frac{\gamma}{p^3\nu^2}\bee
and the friction  number
\bbe \lambda = \frac\mu{p^2 h} \bee
 for  controlling Hartmann layer friction in MHD, equation (\ref{new20}) is transformed into the   dimensionless vorticity equation of stream function $\psi$:
\bbe\label{new2}
-\p_t \Delta \psi+ \p_x \psi \,\p_y\Delta \psi-\p_y\psi\,\p_x\Delta \psi+\frac1 R \Delta^2 \psi - \frac\lambda R\Delta \psi =\frac1 R  \sin y,
\bee
derived by   the  dimensional analysis  based on the typical length scale $p^{-1}$, the velocity scale $p^{-2} \nu^{-1}\gamma$ and the time scale $\gamma^{-1} p\nu$.

The MHD experiment showed  the much higher critical Reynolds number$R_c = O(10^3)$ for $\lambda =20$ in contrast to $R_c=O(1)$ of the idealized two-dimensional flow (see Iudovich \cite{7}). This  discrepancy was elucidated (see Sommeria and Moreau \cite{N17}, Thess \cite{N18,N19},    Bondarenko {\it et al.} \cite{N11}) by taking into account high Hartmann layer friction as only the region   $\lambda >>1$ is accessible to  MHD experimental investigations.

 %However, inconsistency arises for  experimental observations and theoretical predictions. For example,  as given by Bondarenko {\it et al.} \cite{N11},

The  analytical results  of Iudovich \cite{7} and numerical results of Belotserkovskii \cite{N8}   on spatial periodic domains  was also employed by Bondarenko {\it et al.} \cite{N11} for the comparison with     the experimental observations.

 Since the  electrolytic fluid  of the laboratory model  of Bondarenko {\it et al.} \cite{N11} is bounded by two lateral  walls of the plane horizontal cell,     Thess \cite{N18} considered  (\ref{new2}) on the duct domain $0\leq y \leq 2N\pi$   with the velocity field $\u$ satisfying slip boundary condition on the walls $y=0$ and $y=2N\pi$ for an integer $N>0$. He conducted numerical investigations into linear stability of  (\ref{new2}) in the duct and provided possible critical Reynolds number values comparable to those in experiment of \cite{N11}.  In contrast to the linear stability numerical results  of Thess \cite{N18},  Chen and Price \cite{N20} proved that (\ref{new2}) in the  duct bounded by slip boundary walls  $y=0$ and $y=2\pi$  (i.e. $N=1$), all
possible secondary flows transitional from the basic flow $u_0$ are self oscillations.
That is, the instabilities arising were analytically proved to be Hopf
bifurcations which were subsequently verified by simple numerical predictions. The secondary steady state flows observed by Bondarenko {\it et al.} \cite{N11}  arise only when $N>1$.  Chen  and Price \cite{N21}  proved the existence of  critical Reynolds number values resulted from the real linear spectral problem of (\ref{new2}) in the duct $0<y<2N\pi$. Then (\ref{new2}) is spectrally  truncated in  an infinite Fourier mode space containing  the basic flow and critical eigenfunctions. A circle of secondary steady-state flows supercritically  bifurcated from the basic flow in the truncated subspace is constructed analytically and is comparable with the secondary flows observed  by Bondarenko {\it et al.} \cite{N11}. Recently, the study of the Hartmann layer  friction effect  leads to the introduction of dissipative free surface Green function approach to  wave-structure interaction in  hydrodynamics (see Chen \cite{N22}).

Frenkel \cite{N23} considered  a quasi-normal mode approach in examining the
linear stability of periodic flows. This approach was further developed   by Zhang \cite{N24},  Zhang  and  Frenkel \cite{N25} to investigate linear stability problems.
Zhang \cite{N24} showed that intermediate-scale nonlinear instability of multidirectional
periodic flows is mathematically modelled by the Landau equation.

However, the existence of the secondary steady state solution to  wall-bounded fluid motion model  observed by Bondarenko {\it et al.} \cite{N11} in MHD laboratory experiment is still not demonstrated rigorously  in mathematical analysis.

The motivation for the present investigation is to prove  the existence of the secondary steady-state  flows  of (\ref{new2}) in the wall-bounded  domains for $N>1$.
As observed  by Chen and Price  \cite{N21}, the linearized equation of (\ref{new2}) under the wall slip  boundary condition admits a two-dimensional  eigenfunction space  at a single critical Reynolds number and there is  no flow invariant subspace containing a single eigenfunction, which is a crucial technical  condition necessary in   steady-state bifurcation theory (see, for example, Krasnoselskii \cite{N26}, Rabinowitz \cite{N27,N28}).
Inspired by the phase transition technique recently developed  by Ma and Wang \cite{N29_,N29} using the centre manifold theory, we find that topological structure for the exchange of  stability and  instability
of the basic flow around  a critical Reynolds number can be seen clearly in its centre manifold. Therefore, the existence of the steady-state bifurcation is proved.

 The present  theoretical predictions are in a good agreement  with the laboratory experimental observations of Bondarenko {\it et al.} \cite{N11} when $R=O(10^3)$. Further transition of the secondary flow to well developed turbulence is presented numerically for $R=O(10^4)$.

The steady-state bifurcation derived by the center manifold theory  lies on the analytical construction of the critical eigenfunctions, which is based on the spectral analysis of Chen and Price \cite{N20,N21} by using continued  fraction technique initiated from Mishalkin and Sinai \cite{6} and developed from Frenkel \cite{N23} and Zhang \cite{N24}.

%\end{document}

\section{Real spectral problem}
%For convenience, we use the stream function $\psi$ instead of the velocity $\u =(\p_y \psi,-\p_x\psi)$ to transform the velocity equation (\ref{new2}) into the vorticity equation
%\bbe\label{NSE}0=-\p_t \Delta \psi+\frac1 R \Delta^2 \psi - \frac\lambda R\Delta \psi+ \p_x \psi \,\p_y\Delta \psi-\p_y\psi\,\p_x\Delta \psi -\frac1 R  \sin y.
%\bee
The stream function $\psi$ solving (\ref{new2}) in  the wall-bounded  domain  is subject to the slip  boundary condition \cite{N18}
\bbe \label{bd1} \psi =\Delta \psi =0 \,\,\,\, \mbox{ at } y=0,\,\, y=2N\pi
\bee
and the periodic boundary condition in the $x$ direction
\bbe\psi(-\pi/k,y)=\psi(\pi/k,y),\,\,\,\, 0\leq y \leq 2N\pi.\label{bd2}
\bee
The Kolmogorov flow  is modified as
\bbe \psi_0 = \frac1{1+\lambda}\sin y.
\bee

By using the perturbation
\bbe \hat\psi=\psi-\psi_0,\bee
 equation (\ref{new2}) is written as, after omitting the superscript `hat',
 \bbe\label{n1a}
0=-\p_t \Delta \psi+L\psi+ \p_x \psi \,\p_y\Delta \psi-\p_y\psi\,\p_x\Delta \psi
\bee
with the linear operator
\bbe L\psi  = -\frac\lambda R \Delta \psi+\frac1 R \Delta^2\psi -\frac1{1+\lambda}\cos y (\Delta +1)\p_x\psi.
\bee

The linearisation of (\ref{n1a}) gives
\bbe\label{linear}
0=-\p_t \Delta \psi+L\psi
\bee
By taking $\psi= \e^{t\sigma/R}\psi'$ and omitting the superscript `prime', we have the spectral problem of (\ref{n1a})
\bbe\label{n1b}
0=-\frac\sigma   R\Delta\psi+L\psi %-\frac\lambda R \Delta \psi+\frac1 R \Delta^2\psi+
\bee
for a real eigenvalue $\sigma $.
%The spectral  equation (\ref{n1b}) together with (\ref{bd1})  implies the extended slip boundary condition
%\bbe \Delta^n\psi=0 \,\,\,\, \mbox{ for } \,\,\,\,y=0,\, 2N\pi,\,\,\, n=0,1,2,.... \label{nnn4}\bee
By the Fourier expansion, the eigenfunction of the spectral problem (\ref{n1b}) together with the  boundary conditions  (\ref{bd1}) and  (\ref{bd2})  can be expressed as
\bbe
\psi =\e^{\i k_xx}\sum_{n\in Z} \i^n \phi_n \sin(n+\frac j{2N})y. \label{nnn5}
\bee
for a wave number $k_x$ and $\i=\sqrt{-1}$. Here, for convenience, we use the explicit form of the factor $\i^n$ to ensure  $\phi_n$ to be real (see  Lemma \ref{L1}).

The existence of the real eigenvalue $\sigma$  and critical Reynolds number $R_c=R|_{\sigma =0}$ has been obtained.
\begin{lemma}\label{L1} (Chen and Price \cite[Lemma 2.1]{N21} ) Let $\lambda\ge 0$. Assume that wave number $k_x>0$ and   the integers $N\ge 2$ and $j=1,..., N-1$ are subject to the condition
\bbe \label{be}  k_x^2 + (\frac j{2N})^2<1  \,\,\,\mbox{ and }\,\,\, k_x^2+(1-\frac j{2N})^2>1.
\bee
Then for $\sigma   > -\lambda -k_x^2-(\frac j{2N})^2$,  there exists a unique value  $R$ so that the spectral problem (\ref{n1b}) and (\ref{nnn5}) has an eigenfunction solution $\psi$.   The  coefficients $\phi_n$ of the eigenfunction (\ref{nnn5}) is uniquely  determined as, up to a real constant factor,
\be \phi_0=1\,\,\,\,\mbox{ and } \,\,\,\,\phi_{\pm n} =\frac{\beta_0-1}{\beta_{\pm n}-1} \gamma_{\pm1}\cdots\gamma_{\pm n}, \,\,\, n\ge 1
\ee
for $\beta_{\pm n}= k_x^2+(\frac j{2N}\pm n)^2$ and
\bbe\label{nnn6}
\gamma_{\pm n} =\lim_{m\to \infty } \frac{\mp1}{\frac{2(1+\lambda)\beta_{\pm n}(\sigma  +\lambda + \beta_{\pm n})}{Rk_x(\beta_{\pm n}-1)}+ \frac1{\ddots + \frac1{\frac{2(1+\lambda)\beta_{\pm m}(\sigma  +\lambda + \beta_{\pm m})}{Rk_x(\beta_{\pm m}-1)}}}}.
\bee
\end{lemma}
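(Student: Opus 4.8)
\medskip

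The plan is to insert the Fourier ansatz (\ref{nnn5}) into the real spectral problem (\ref{n1b}) and reduce it to a scalar three‑term recurrence. Using $\Delta\bigl(\e^{\i k_x x}\sin(n+\tfrac{j}{2N})y\bigr)=-\beta_n\,\e^{\i k_x x}\sin(n+\tfrac{j}{2N})y$ and the identity $\cos y\,\sin(n+\tfrac{j}{2N})y=\tfrac12\bigl[\sin(n{+}1{+}\tfrac{j}{2N})y+\sin(n{-}1{+}\tfrac{j}{2N})y\bigr]$, and collecting the coefficient of $\i^{\,n}\sin(n+\tfrac{j}{2N})y$ for each $n\in\Z$, equation (\ref{n1b}) turns into a relation among $\phi_{n-1},\phi_n,\phi_{n+1}$; the explicit factor $\i^{\,n}$ in (\ref{nnn5}) is exactly what makes all coefficients real. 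Setting $u_n=(\beta_n-1)\phi_n$ this recurrence takes the compact form
\[
d_n u_n=u_{n+1}-u_{n-1},\qquad d_n=\frac{2(1+\lambda)\beta_n(\sigma+\lambda+\beta_n)}{Rk_x(\beta_n-1)},\qquad n\in\Z .
\]
Hypothesis (\ref{be}) gives $\beta_0=k_x^2+(\tfrac{j}{2N})^2<1$ and $\beta_{\pm1}>1$, hence $\beta_n>1$ for all $|n|\ge1$ and $\beta_0=\min_n\beta_n$; together with $\sigma>-\lambda-\beta_0$ this yields $\sigma+\lambda+\beta_n>0$ for every $n$, so $d_n>0$ for $|n|\ge1$, $d_0<0$, and $d_n\to+\infty$ as $|n|\to\infty$.

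\medskip

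Next I would argue that any genuine eigenfunction must have $\phi_n\to0$, equivalently $u_n\to0$ (since $\beta_n-1\to\infty$), so on each half‑line $n\ge1$ and $n\le-1$ the sequence $(u_n)$ is forced to be the \emph{minimal} (recessive) solution of the recurrence. Since $d_n\to\infty$, the associated continued fractions converge, and by the classical theory of minimal solutions of three‑term recurrences (Pincherle's theorem) the minimal solution exists and is unique up to a multiplicative constant, with successive ratios $u_n/u_{n-1}=\gamma_{+n}$ and $u_{-n}/u_{-(n-1)}=\gamma_{-n}$ for $n\ge1$, where $\gamma_{\pm n}$ are the continued fractions of (\ref{nnn6}); the sign $\mp1$ in the numerator is dictated by the direction in which the recurrence is iterated on the two sides. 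A nontrivial solution forces $u_0\ne0$ (otherwise minimality propagates $u_n\equiv0$), so one normalises $\phi_0=1$, i.e. $u_0=\beta_0-1$, and obtains $\phi_{\pm n}=\dfrac{\beta_0-1}{\beta_{\pm n}-1}\,\gamma_{\pm1}\cdots\gamma_{\pm n}$ for $n\ge1$.

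\medskip

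Everything then comes down to the single remaining equation, the $n=0$ row $d_0u_0=u_1-u_{-1}$. Since $u_{\pm1}=\gamma_{\pm1}u_0$ and $u_0\ne0$, it reduces to $d_0=\gamma_{+1}(R)-\gamma_{-1}(R)$, i.e.
\[
\frac{2(1+\lambda)\beta_0(\sigma+\lambda+\beta_0)}{k_x\,(\beta_0-1)}=R\,\gamma_{+1}(R)-R\,\gamma_{-1}(R).
\]
I would treat this by clearing one power of $R$ from each level of (\ref{nnn6}): with $s=R^2$, one gets $R\gamma_{+1}(R)=\cfrac{-s}{C_{1}+\cfrac{s}{C_{2}+\cdots}}$ and $R\gamma_{-1}(R)=\cfrac{s}{C_{-1}+\cfrac{s}{C_{-2}+\cdots}}$ with $R$‑independent positive constants $C_{\pm n}=2(1+\lambda)\beta_{\pm n}(\sigma+\lambda+\beta_{\pm n})/\bigl(k_x(\beta_{\pm n}-1)\bigr)$. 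Continued fractions of this type are strictly increasing in $s$, so $R\gamma_{+1}(R)-R\gamma_{-1}(R)$ is strictly decreasing in $R$, is negative for every $R>0$, tends to $0$ as $R\to0^+$, and (one checks) tends to $-\infty$ as $R\to\infty$; since the left‑hand side is a fixed negative constant (this is where $\beta_0<1$ enters), the equation has a unique positive root $R$. This $R$, together with the formulas for the $\phi_n$, completes the proof.

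\medskip

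The main obstacle is the last step: one must control the two infinite continued fractions $\gamma_{\pm1}(R)$ simultaneously as functions of $R$, justify their convergence uniformly in $R$, prove the monotonicity in $s=R^{2}$, and pin down the limiting behaviour as $R\to0^+$ and $R\to\infty$ — the continued‑fraction calculus in the variable $s$ is the technical heart of the argument. By contrast, the reduction to the recurrence and the identification of the minimal solution are comparatively routine once the normalisation $u_n=(\beta_n-1)\phi_n$ and the sign bookkeeping supplied by (\ref{be}) and the admissible range of $\sigma$ are in place.
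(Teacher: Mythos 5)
You should first note what the paper actually provides for Lemma \ref{L1}: nothing beyond a citation. The lemma is imported verbatim from Chen and Price \cite[Lemma 2.1]{N21}, and the only supporting remark in the present text is that the convergence of the continued fractions (\ref{nnn6}) follows from Wall \cite[Theorem 30.1]{N30} and Khinchin \cite[Theorem 10]{N31}. Against that background, your reconstruction is the right one and is consistent with all the machinery the paper does display: substituting (\ref{nnn5}) into (\ref{n1b}) gives exactly the three-term recurrence (\ref{nnn8}); your substitution $u_n=(\beta_n-1)\phi_n$ turns it into $d_nu_n=u_{n+1}-u_{n-1}$; the sign bookkeeping from (\ref{be}) (namely $\beta_0<1<\beta_n$ for $n\neq0$, hence $d_0<0<d_n$ and $d_n\to\infty$) is correct; the minimal-solution (Pincherle) identification of the ratios $u_{\pm n}/u_{\pm(n-1)}$ with the continued fractions $\gamma_{\pm n}$, including the $\mp1$ sign, reproduces the stated formula for $\phi_{\pm n}$; and your $n=0$ characteristic equation $d_0=\gamma_{+1}-\gamma_{-1}$ is precisely the paper's equation (\ref{xx}), i.e.\ \cite[eq.\ (2.8)]{N21}, after clearing the factor $R/(\sigma+\lambda+\beta_0)$.

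The one genuine gap is exactly where you flagged it, in the existence half of the ``unique $R$'' claim. Your monotonicity step can be closed: writing each truncation as $F_n(s)=s/(C_n+F_{n+1}(s))$ with $s=R^2$, induction from the innermost level gives $F_n'\geq0$ together with $F_n-sF_n'\geq0$, so every truncation, hence the limit, is nondecreasing in $s$; that yields uniqueness of the root. But uniqueness is empty without existence: you must show that $R\gamma_{+1}(R)-R\gamma_{-1}(R)$ actually descends past the fixed negative constant $2(1+\lambda)\beta_0(\sigma+\lambda+\beta_0)/(k_x(\beta_0-1))$, i.e.\ that $s/(C_1+s/(C_2+\cdots))$ is unbounded in $s$. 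This is not automatic, since the even-order approximants of such a positive continued fraction stay bounded as $s\to\infty$ (for instance $s/(C_1+s/C_2)\to C_2$) while the odd-order ones diverge, so the limit could a priori be finite; neither you nor the paper supplies the argument (the paper asserts the same unboundedness without proof in the passage after (\ref{xx})). The gap is closable: the value dominates every even-order approximant $T_{2k}(s)$, and $\lim_{s\to\infty}T_{2k}(s)=C_2+C_4+\cdots+C_{2k}$, whose supremum over $k$ is infinite because $C_n\to\infty$; hence the expression tends to $-\infty$ and a root exists. Until that step is written out (or explicitly taken from \cite{N21}), your proof of the existence of $R$ is incomplete.
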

The convergence of the continued fractions in (\ref{nnn6})  is due to Wall \cite[Theorem 30.1]{N30}
and Khinchin \cite[Theorem 10]{N31}.

The Hilbert space  $L_2=L_2([-\frac\pi {k_x},\frac\pi{k_x}]\times [0,2N\pi])$ is associated with the  boundary conditions  (\ref{bd1}) and  (\ref{bd2}) and the inner product
\be \langle \psi,\phi\rangle = \int^{\pi/k_x}_{-\pi/k_x}\int^{2N\pi}_0 \psi\overline{ \phi} dxdy.
\ee
Thus the dual pairing
$$\langle L\psi,\phi\rangle = \langle \psi,L^*\phi\rangle $$
defines  the conjugate operator
\bbe
 L^*\psi^*=-\frac\lambda R \Delta \psi^*+\frac1 R \Delta^2\psi^*+\frac1{1+\lambda} (\Delta +1)\cos y \p_x\psi^*
\bee
and the conjugate spectral problem
\bbe\label{n1c}
\frac\sigma   R\Delta \psi^*=L^*\psi^* %-\frac\lambda R \Delta \psi^*+\frac1 R \Delta^2\psi^*+
\bee
associated with (\ref{bd1}) and (\ref{bd2}).
Hence we can write  the conjugate eigenfunction as
\bbe
\psi^* =\e^{\i k_xx}\sum_{n\in Z} \i^n \phi_n^* \sin(n+\frac j{2N})y \label{nnn7}
\bee
for coefficients $\phi_n^*$, to be shown  real in (\ref{realphi}).

\begin{remark}
When $j=N$, the eigenvalue $\sigma  $ of the spectral problem (\ref{n1b}) and (\ref{nnn5}) is a complex number, which  becomes  pure imaginary  at the corresponding  critical Reynolds number. The existence of Hopf bifurcation from the  Kolmogorov flow around the critical Reynolds number was  proved in \cite{N20}. In the present paper, we thus only consider case  $j=1,...,N-1$.
\end{remark}

\begin{lemma}\label{L2} Under the condition of Lemma \ref{L1}, let $\psi$ be  the eigenfunction given in Lemma \ref{L1} and $\psi^*$ be the associated the conjugate eigenfunction. Then we have
\bbe \langle - \Delta \psi,\psi^*\rangle  < 0 \mbox{ and } \langle \psi,\psi^*\rangle < 0.
\bee
\end{lemma}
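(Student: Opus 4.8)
The plan is to pass to Fourier coefficients, pin down the conjugate eigenfunction explicitly in terms of the primal one, and then read off the two signs. First I would reduce both pairings to coefficient series: the functions $\e^{\i k_x x}\sin((n+\frac j{2N})y)$, $n\in\Z$, are mutually orthogonal in $L_2$ with common squared norm $2\pi^2N/k_x$, and $-\Delta$ multiplies the $n$-th one by $\beta_n:=k_x^2+(n+\frac j{2N})^2$; since the factors $\i^n$ in (\ref{nnn5}) and (\ref{nnn7}) are aligned, substituting the expansions gives
\begin{align*}
\langle-\Delta\psi,\psi^*\rangle=\frac{2\pi^2N}{k_x}\sum_{n\in\Z}\beta_n\phi_n\phi_n^*,\qquad
\langle\psi,\psi^*\rangle=\frac{2\pi^2N}{k_x}\sum_{n\in\Z}\phi_n\phi_n^*,
\end{align*}
both series converging because the continued fractions of Lemma \ref{L1} force $\phi_n,\phi_n^*$ to decay faster than any power of $n$. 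Thus it suffices to show $\sum_n\phi_n\phi_n^*<0$ and $\sum_n\beta_n\phi_n\phi_n^*<0$.

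Next I would make the conjugate eigenfunction explicit. Inserting (\ref{nnn5}) into (\ref{n1b}) and using $\cos y\sin((n+\frac j{2N})y)=\tfrac12[\sin((n{+}1{+}\tfrac j{2N})y)+\sin((n{-}1{+}\tfrac j{2N})y)]$ gives the three-term recursion $A_n\phi_n=(\beta_{n+1}-1)\phi_{n+1}-(\beta_{n-1}-1)\phi_{n-1}$ with $A_n:=2(1+\lambda)\beta_n(\sigma+\lambda+\beta_n)/(Rk_x)$, while the same computation applied to the conjugate problem (\ref{n1c}) — where $\Delta+1$ and the multiplication by $\cos y$ are interchanged — gives $A_n\phi_n^*=(\beta_n-1)(\phi_{n-1}^*-\phi_{n+1}^*)$. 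One then checks by direct substitution (using the first recursion) that $(-1)^n(\beta_n-1)\phi_n$ solves the second recursion; it decays like $(\beta_0-1)\gamma_{\pm1}\cdots\gamma_{\pm n}\to0$, hence is the unique (up to a scalar) $L_2$-admissible solution, exactly as in the proof of Lemma \ref{L1}. Fixing the normalisation of $\psi^*$, this yields $\phi_n^*=(-1)^n(\beta_n-1)\phi_n$ (the real expression (\ref{realphi})), whence
\begin{align*}
\sum_{n\in\Z}\phi_n\phi_n^*=\sum_{n\in\Z}(-1)^n(\beta_n-1)\phi_n^2,\qquad
\sum_{n\in\Z}\beta_n\phi_n\phi_n^*=\sum_{n\in\Z}(-1)^n\beta_n(\beta_n-1)\phi_n^2.
\end{align*}

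Finally I would settle the signs. By hypothesis (\ref{be}) one has $\beta_0=k_x^2+(\frac j{2N})^2<1$ whereas $\beta_n>1$ for all $n\ne0$, with $\beta_n$ increasing in $|n|$; hence in each series the $n=0$ term is strictly negative and every $n\ne0$ term carries the sign $(-1)^n$. Pairing the $\pm n$ contributions, the task is to show the alternating tail $\sum_{n\ge1}(-1)^n[(\beta_n-1)\phi_n^2+(\beta_{-n}-1)\phi_{-n}^2]$ (and its $\beta_n$-weighted version) cannot overcome the modulus of the $n=0$ term. For this I would use the recursive (Riccati) form of the continued fraction (\ref{nnn6}), which gives $a_{\pm n}:=A_{\pm n}/(\beta_{\pm n}-1)>0$ and $|\gamma_{\pm n}|<1/a_{\pm n}$, and combine it with the quadratic growth of $\beta_n$ and the identity $(\beta_{\pm n}-1)\phi_{\pm n}=(\beta_0-1)\gamma_{\pm1}\cdots\gamma_{\pm n}$ to conclude that $n\mapsto(\beta_{\pm n}-1)\phi_{\pm n}^2$ and $n\mapsto\beta_{\pm n}(\beta_{\pm n}-1)\phi_{\pm n}^2$ are decreasing for $|n|\ge1$; then each alternating tail lies between the negative of its first term and $0$, and, added to the strictly negative $n=0$ terms, both series are negative. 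If the opposite normalisation of $\psi^*$ was chosen, one replaces $\psi^*$ by $-\psi^*$.

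The first two steps are essentially bookkeeping. The genuine difficulty is the last estimate: controlling the alternating tails so that they cannot swamp the sign fixed by the $n=0$ mode. This is the only place where condition (\ref{be}) — which supplies the single anchoring negative term $\beta_0-1<0$ — really enters, and it is where the quantitative decay of the continued-fraction quantities $\gamma_{\pm n}$, together with the growth of $\beta_n$, has to be exploited carefully; if monotonicity turns out to fail in some corner of the parameter range, the fallback is a direct bound $\sum_{n\ne0}(\beta_n-1)\phi_n^2<1-\beta_0$ obtained from the same continued-fraction estimates.
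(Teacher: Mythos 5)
Your first two steps coincide with the paper's own proof: the reduction to coefficient series and the identification $\phi_n^*=(-1)^n(\beta_n-1)\phi_n$ (the paper's (\ref{realphi})) are exactly what is done there, and your verification by substitution into the conjugate recursion is correct. The divergence, and the genuine gap, is in the final sign argument. Your primary route rests on the claim that $n\mapsto(\beta_{\pm n}-1)\phi_{\pm n}^2$ is decreasing for $|n|\ge1$, which via $(\beta_{\pm n}-1)\phi_{\pm n}=(\beta_0-1)\gamma_{\pm1}\cdots\gamma_{\pm n}$ amounts to $\gamma_{\pm(n+1)}^2<(\beta_{\pm(n+1)}-1)/(\beta_{\pm n}-1)$, essentially $|\gamma_{\pm n}|\le 1$. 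The bound $|\gamma_{\pm n}|<1/d_{\pm n}$, with $d_{\pm n}=2(1+\lambda)\beta_{\pm n}(\sigma+\lambda+\beta_{\pm n})/(Rk_x(\beta_{\pm n}-1))$ the partial quotient in (\ref{nnn6}), does not deliver this: for large $R$ the $d_{\pm n}$ are small for all moderate $n$, so the reciprocal bound is vacuous, and the tails $K_{\pm n}=d_{\pm n}+1/K_{\pm(n+1)}$ then alternate between small and large values, so that individual $|\gamma_{\pm n}|=1/|K_{\pm n}|$ can exceed $1$; only consecutive products are controlled, since $K_nK_{n+1}=d_nK_{n+1}+1>1$. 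Hence the monotonicity, and with it the alternating-series estimate, is not secured. Your fallback inequality $\sum_{n\ne0}(\beta_n-1)\phi_n^2<1-\beta_0$ is in fact true, but it does not follow from ``the same continued-fraction estimates'' in any way you have shown; it needs a separate argument.

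The paper closes this step without any tail estimate, by a telescoping identity you should adopt: multiply the $n$th recursion $A_n\phi_n=(\beta_{n+1}-1)\phi_{n+1}-(\beta_{n-1}-1)\phi_{n-1}$ by $(\beta_n-1)\phi_n$ and sum over $n\in\Z$; writing $c_n=(\beta_n-1)\phi_n$, the right-hand side becomes $\sum_n(c_{n+1}-c_{n-1})c_n=0$, which gives the exact identity $\sum_{n\in\Z}\beta_n(\sigma+\lambda+\beta_n)(\beta_n-1)\phi_n^2=0$ (the paper's (\ref{a00})). Since $\sigma>-\lambda-\beta_0$ and $\beta_n>\beta_0$, $\beta_n-1>0$ for all $n\ne0$ while $\beta_0-1<0$, replacing the weight $\beta_n(\sigma+\lambda+\beta_n)$ by the smaller positive constant $\beta_0(\sigma+\lambda+\beta_0)$ (respectively by $(\sigma+\lambda+\beta_0)\beta_n$) strictly decreases the positive $n\ne0$ terms and fixes the $n=0$ term, yielding $\sum_n(\beta_n-1)\phi_n^2<0$ and $\sum_n\beta_n(\beta_n-1)\phi_n^2<0$; combined with $(-1)^n(\beta_n-1)\phi_n^2\le(\beta_n-1)\phi_n^2$ this gives both asserted signs at once, and your fallback bound drops out as a corollary. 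This identity is where condition (\ref{be}) genuinely enters, and it supersedes the delicate decay analysis you were attempting.
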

\begin{proof}
By elementary manipulation, we have
\begin{align*}
\cos y (\Delta +1)\p_x\psi&= -k_x  \e^{\i k_xx } \sum_{n\in \Z} \i^{n+1}\cos y \phi_{n}(\beta_{n}-1)\sin(n+\frac j{2N})y
%\\&= \frac12 k_x  \e^{\i k_xx } \sum_{n\in \Z} \i^{n}\phi_{n}[ \sin(n+1+\frac j{2N})y+\sin(n+1+\frac j{2N})y]
\\
&= \frac12 k_x  \e^{\i k_xx } \sum_{n=-\infty}^\infty \i^{n} [ (\beta_{n+1}-1)\phi_{n+1}-(\beta_{n-1}-1)\phi_{n-1}] \sin(n+\frac j{2N})y
\end{align*}
and
\begin{align*}
 (\Delta +1)\cos y\p_x\psi^*&
= \frac12 k_x  \e^{\i k_xx } \sum_{n=-\infty}^\infty \i^{n}  (\beta_{n}-1)(\phi_{n+1}^*-\phi_{n-1}^*) \sin(n+\frac j{2N})y.
\end{align*}
Therefore, the spectral problem (\ref{n1b}) and (\ref{nnn5}) and its conjugate problem reduce respectively to the algebraic equations
\begin{align}
\frac{2(1+\lambda)\beta_n(\sigma  +\lambda  +\beta_n)}{Rk_x} \phi_n=\phi_{n+1}  (\beta_{n+1}-1)- \phi_{n-1} (\beta_{n-1}-1),\,\,\, n\in Z,\label{nnn8}
\\
\frac{2(1+\lambda)\beta_n(\sigma  +\lambda + \beta_n) }{Rk_x}\phi_n^*=- (\beta_{n}-1)(\phi_{n+1}^* - \phi_{n-1}^*),\,\,\, n\in Z.\label{n1}
\end{align}
By (\ref{nnn8}) and (\ref{n1}), we have, up to a constant factor,
\bbe \label{realphi}\phi^*_n = (-1)^n(\beta_{n}-1)\phi_{n}
\bee
and hence
\bbe \langle  \Delta \psi,\psi^*\rangle  &=& \sum_{n \in \Z} (-1)^n\beta_n(\beta_n-1)\phi_n^2
< \sum_{n  \in \Z} \beta_n(\beta_n-1)\phi_n^2, \label{nnn9}
\\
 \langle   \psi,\psi^*\rangle  &=& \sum_{n \in \Z} (-1)^n(\beta_n-1)\phi_n^2
< \sum_{n  \in \Z} (\beta_n-1)\phi_n^2, \label{nnn91}
\bee
where we have used the condition $\beta_0-1<0$ and $\beta_{-1}-1 >0$ given in (\ref{be}), which ensures $\beta_n-1>0$ for $\n\neq 0$.

On the other hand, multiplying the $n$th equation of (\ref{nnn8}) by $(\beta_n-1)\phi_n$ and summing the resultant equations, we have
\bbe  0&=&\sum_{n \in \Z} \beta_n(\sigma  +\lambda  +\beta_n)(\beta_n-1)\phi_n^2\label{a00}
\\
&>& \sum_{n \in \Z} \beta_n(\sigma  +\lambda  +\beta_0)(\beta_n-1)\phi_n^2
\\
&>& \sum_{n \in \Z} \beta_0(\sigma  +\lambda  +\beta_0)(\beta_n-1)\phi_n^2.
\bee
This gives
\bbe\label{zero}
 \sum_{n  \in \Z} \beta_n(\beta_n-1)\phi_n^2<0 \mbox{ and } \sum_{n  \in \Z} (\beta_n-1)\phi_n^2<0.
 \bee
The combination of the previous equation with (\ref{nnn9}) and (\ref{nnn91})  yields the desired inequalities
\bbe \langle  \Delta \psi,\psi^*\rangle  &=& \sum_{n \in \Z} (-1)^n\beta_n(\beta_n-1)\phi_n^2
< 0, \label{nnn99}
\\
 \langle   \psi,\psi^*\rangle  &=& \sum_{n \in \Z} (-1)^n(\beta_n-1)\phi_n^2
< 0. \label{nnn991}
\bee
\end{proof}

\begin{lemma}
For the eigenvalue $\sigma  $ given in Lemma \ref{L1}, then we have
\bbe \frac{d\sigma  (R) }{dR} >0\,\,\,\mbox{ for }\,\,\, R>0\label{dR}
\bee
and
\bbe \label{R0}\lim_{R\to 0}\sigma  (R)=-\lambda -\beta_0 \mbox{ and }\,\,\, \lim_{R\to \infty} \sigma  (R)= \infty.
\bee\label{L3}
\end{lemma}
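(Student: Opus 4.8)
The plan is to trade the continued‑fraction description of the branch $R\mapsto\sigma(R)$ for a differential identity obtained by perturbing the eigenvalue equation. Multiplying (\ref{n1b}) by $R$, the eigenfunction $\psi=\psi(R)$ of Lemma \ref{L1} solves
\[
M(R)\psi=0,\qquad M(R):=(\sigma+\lambda)(-\Delta)+\Delta^2-\tfrac{R}{1+\lambda}\cos y\,(\Delta+1)\p_x ,
\]
and, multiplying (\ref{n1c}) by $R$ in the same way, the conjugate eigenfunction $\psi^*=\psi^*(R)$ solves $M(R)^*\psi^*=0$. Since the eigenfunction is unique up to a scalar (Lemma \ref{L1}) and $\langle-\Delta\psi,\psi^*\rangle\neq0$ by Lemma \ref{L2}, the eigenvalue $\sigma$ is algebraically simple, so $\sigma(R),\psi(R),\psi^*(R)$ are smooth in $R$. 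Differentiating $M(R)\psi=0$ gives $\sigma'(R)(-\Delta)\psi-\tfrac1{1+\lambda}\cos y\,(\Delta+1)\p_x\psi+M(R)\p_R\psi=0$, and pairing with $\psi^*$ kills the last term because $\langle M(R)\p_R\psi,\psi^*\rangle=\langle\p_R\psi,M(R)^*\psi^*\rangle=0$; hence
\[
\sigma'(R)\,\langle-\Delta\psi,\psi^*\rangle=\tfrac1{1+\lambda}\,\langle\cos y\,(\Delta+1)\p_x\psi,\psi^*\rangle .
\]

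I would then evaluate both pairings with the series (\ref{nnn5}), (\ref{nnn7}), the orthogonality of $\{\e^{\i k_xx}\sin((n+\tfrac j{2N})y)\}$ on the fluid domain, and (\ref{realphi}). Up to the common positive factor $2N\pi^2/k_x$, one has $\langle-\Delta\psi,\psi^*\rangle=\sum_{n\in\Z}(-1)^n\beta_n(\beta_n-1)\phi_n^2$; and using the formula for $\cos y\,(\Delta+1)\p_x\psi$ already obtained in the proof of Lemma \ref{L2} together with the recurrence (\ref{nnn8}), $\cos y\,(\Delta+1)\p_x\psi=\tfrac{1+\lambda}{R}\e^{\i k_xx}\sum_{n\in\Z}\i^n\beta_n(\sigma+\lambda+\beta_n)\phi_n\sin((n+\tfrac j{2N})y)$, so (same factor) $\langle\cos y\,(\Delta+1)\p_x\psi,\psi^*\rangle=\tfrac{1+\lambda}{R}\sum_{n\in\Z}(-1)^n\beta_n(\sigma+\lambda+\beta_n)(\beta_n-1)\phi_n^2$. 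Hence
\[
\sigma'(R)=\frac1R\,\frac{\displaystyle\sum_{n\in\Z}(-1)^n\beta_n(\sigma+\lambda+\beta_n)(\beta_n-1)\phi_n^2}{\displaystyle\sum_{n\in\Z}(-1)^n\beta_n(\beta_n-1)\phi_n^2}.
\]
The denominator is $<0$ by Lemma \ref{L2}. For the numerator I would repeat the term‑by‑term comparison used to prove Lemma \ref{L2}: its even‑$n$ terms coincide with those of $\sum_n\beta_n(\sigma+\lambda+\beta_n)(\beta_n-1)\phi_n^2$, which is $0$ by (\ref{a00}); for odd $n$ one has $\beta_n>1$ and $\sigma+\lambda+\beta_n\ge\sigma+\lambda+\beta_0>0$, so that odd summand is $\le0$ while the corresponding unsigned one is $\ge0$; and the inequality is strict since $\phi_1\neq0$ (the continued fraction $\gamma_1$ is finite and nonzero). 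Thus the numerator is $<0$ and $\sigma'(R)=(\text{neg})/(R\cdot\text{neg})>0$.

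For (\ref{R0}), monotonicity together with Lemma \ref{L1} (a unique continuous $R=R(\sigma)>0$ for every $\sigma>-\lambda-\beta_0$) makes $\sigma(R)$ a strictly increasing continuous function on $(0,\infty)$, so the limits $\ell=\lim_{R\to0^+}\sigma(R)\ge-\lambda-\beta_0$ and $L=\lim_{R\to\infty}\sigma(R)\le\infty$ exist. Since every partial denominator in (\ref{nnn6}) is positive, $|\gamma_{\pm n}|\le \frac{R\,k_x(\beta_{\pm n}-1)}{2(1+\lambda)\beta_{\pm n}(\sigma+\lambda+\beta_{\pm n})}=O(R)$ as $R\to0$ (using $\sigma+\lambda+\beta_{\pm n}>\beta_{\pm n}-\beta_0>0$ for $n\neq0$), hence $\phi_n(R)\to0$ for every $n\neq0$; then (\ref{a00}), read as $\beta_0(\sigma+\lambda+\beta_0)(1-\beta_0)=\sum_{n\neq0}\beta_n(\sigma+\lambda+\beta_n)(\beta_n-1)\phi_n^2\to0$ (recall $\phi_0=1$), forces $\sigma+\lambda+\beta_0\to0$, i.e. $\ell=-\lambda-\beta_0$. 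If $L<\infty$, then $\sigma$ stays bounded, (\ref{a00}) bounds $\{\phi_n(R)\}$ uniformly, and a pointwise subsequential limit $\{\phi_n^\infty\}$ — letting $R\to\infty$ in (\ref{nnn8}), whose left side is $O(1/R)$ — satisfies $(\beta_{n+1}-1)\phi_{n+1}^\infty=(\beta_{n-1}-1)\phi_{n-1}^\infty$ with $(\beta_n-1)\phi_n^\infty\to0$ as $|n|\to\infty$; this forces $(\beta_0-1)\phi_0^\infty=0$, i.e. $\phi_0^\infty=0$, contradicting $\phi_0=1$. Hence $L=\infty$.

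The real difficulty is the sign of $\sigma'(R)$: differentiating the continued‑fraction closure condition directly produces sign‑alternating series that are hard to control, whereas pairing the differentiated equation with the conjugate eigenfunction collapses $\sigma'(R)$ into the quotient above, each of whose two sums is then settled by exactly the comparison against the vanishing identity (\ref{a00}) that underlies Lemma \ref{L2}.
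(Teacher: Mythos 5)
Your proposal is correct, and for the main assertion (\ref{dR}) it is essentially the paper's own argument in operator clothing: pairing the $R$-differentiated eigenvalue equation with the conjugate eigenfunction to annihilate the $\p_R\psi$ term is exactly what the paper does discretely by differentiating the recurrence (\ref{nnn8}) and summing against $\phi_n^*=(-1)^n(\beta_n-1)\phi_n$; both routes land on the identical quotient (\ref{aaa}), whose numerator and denominator are then settled by the same comparison with (\ref{a00}) and Lemma \ref{L2}. The only genuine divergence is in (\ref{R0}): the paper rescales the continued-fraction closure condition into (\ref{xx}) and reads off both limits from the fact that its left-hand side is independent of $\sigma$ and $R$, whereas you bound the partial quotients $\gamma_{\pm n}=O(R)$ directly to force $\phi_n\to0$ ($n\neq0$) and then invoke the energy identity (\ref{a00}) as $R\to0$, and for $R\to\infty$ you pass to a subsequential limit in the recurrence to derive a contradiction with $\phi_0=1$. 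Your version of the limit arguments is more elementary and arguably more complete (the paper's claim that the right-hand side of (\ref{xx}) blows up as $R\to\infty$ is asserted rather than estimated), at the cost of having to justify uniform decay of $(\beta_n-1)\phi_n$ in $n$, which your bound $\phi_n^2\lesssim\beta_n^{-1}(\sigma+\lambda+\beta_n)^{-1}(\beta_n-1)^{-1}$ from (\ref{a00}) does supply.
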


\begin{proof}
It is  implied  from  \cite{N20} that $\sigma $ is uniquely defined by $R$. For  the Hopf bifurcation problem with respect to complex eigenvalue problem, the corresponding positive derivative property has been proved in \cite{N20}. Equation (\ref{dR}) is now to be  obtained in a similar manner.

  Differentiate (\ref{nnn8}) with respect to $R$ to obtain
\begin{align*}
\lefteqn{\frac{2(1+\lambda)\beta_n\sigma  '}{Rk_x} \phi_n- \frac{2(1+\lambda)\beta_n(\sigma  +\lambda  +\beta_n)}{R^2k_x} \phi_n}\\
&=-\frac{2(1+\lambda)\beta_n(\sigma  +\lambda  +\beta_n)}{Rk_x} \phi_n' +\phi_{n+1}'  (\beta_{n+1}-1)- \phi_{n-1}' (\beta_{n-1}-1)
\end{align*}
for the superscript prime representing the partial derivative with respect to $R$.
Multiplying this equation by $(-1)^n(\beta_n-1)\phi_n$ and summing the resultant equations, we have
\begin{align*}
\lefteqn{\sum_{n  \in \Z} \frac{2(1+\lambda)(-1)^n\beta_n(\beta_n-1) \phi_n^2}{R k_x}\sigma  '- \sum_{n  \in \Z}\frac{2(1+\lambda)\beta_n(\sigma  +\lambda  +\beta_n)(-1)^n(\beta_n-1)}{R^2k_x} \phi_n^2 }\\
&=\sum_{n  \in \Z} \left(-\frac{2(1+\lambda)\beta_n(\sigma  +\lambda  +\beta_n)}{Rk_x} \phi_n'+\phi_{n+1}'  (\beta_{n+1}-1)- \phi_{n-1}' (\beta_{n-1}-1)\right)(-1)^n(\beta_n-1)\phi_n
\\
&=\sum_{n  \in \Z} \left(-\frac{2(1+\lambda)\beta_n(\sigma  +\lambda  +\beta_n)}{Rk_x} \phi_n+\phi_{n+1}  (\beta_{n+1}-1)- \phi_{n-1} (\beta_{n-1}-1)\right)(-1)^n(\beta_n-1)\phi_n',
\end{align*}
which equals zero due to (\ref{nnn8}). Thus we have
\begin{align}\label{aaa}
\sum_{n  \in \Z} (-1)^n\beta_n(\beta_n-1) \phi_n^2\sigma  '= \sum_{n  \in \Z}\frac{\beta_n(\sigma  +\lambda  +\beta_n)(-1)^n(\beta_n-1)}{R} \phi_n^2.
\end{align}
It follows from  (\ref{a00}) that the right-hand side of (\ref{aaa}) becomes
\begin{align*}
\frac1R\sum_{n  \in \Z}\beta_n(\sigma  +\lambda  +\beta_n)(-1)^n(\beta_n-1)\phi_n^2<\frac1R\sum_{n  \in \Z}\beta_n(\sigma  +\lambda  +\beta_n)(\beta_n-1)\phi_n^2=0.
\end{align*}
The combination of the previous equation with (\ref{nnn99}) and (\ref{aaa}) shows
\begin{align*}%\label{aaa}
\sigma  '= \frac1R\frac{\sum_{n  \in \Z}\beta_n(\sigma  +\lambda  +\beta_n)(-1)^n(\beta_n-1) \phi_n^2}{\langle \Delta \psi,\psi^*\rangle }>0,
\end{align*}
or the validity of (\ref{dR}).

For the proof of (\ref{R0}), we see that the spectral problem (\ref{n1b}) and (\ref{nnn5}) is equivalent to the continued fraction equation  \cite[Equation (2.8)]{N21}, which can be expressed as
\begin{align*}
\frac{2\beta_0(1+\lambda)(\sigma +\lambda  +\beta_0)}{Rk_x(1-\beta_0)}
=&\lim_{n\to \infty}\frac{1}{\frac{2\beta_1(1+\lambda)(\sigma  +\lambda +\beta_1)}{R k_x(\beta_1-1)}+\frac{1}{\ddots +\frac{1}{ \frac{2\beta_n(1+\lambda)(\sigma  +\lambda +\beta_n)}{Rk_x(\beta_n-1)}}}}
\\
&+\lim_{n\to \infty}\frac{1}{\frac{2\beta_1(1+\lambda)(\sigma  +\lambda +\beta_1)}{R k_x(\beta_1-1)}+\frac{1}{\ddots +\frac{1}{ \frac{2\beta_n(1+\lambda)(\sigma  +\lambda +\beta_n)}{Rk_x(\beta_n-1)}}}}
\end{align*}
or, by multiplying $R/(\sigma  +\lambda +\beta_0)$ to the previous equation,
\begin{align}
&\frac{2\beta_0(1+\lambda)}{k_x(1-\beta_0)}\label{xx}
\\
&=\lim_{n\to \infty}\frac{1}{\frac{2\beta_1(1+\lambda)(\sigma  +\lambda +\beta_1)(\sigma +\lambda  +\beta_0)}{R^2 k_x(\beta_1-1)}+\frac{1}{\ddots +\frac{1}{ \frac{2\beta_n(1+\lambda)(\sigma  +\lambda +\beta_n)(\sigma +\lambda  +\beta_0)^{(-1)^{n+1}}}{R^{1-(-1)^n}k_x(\beta_n-1)}}}}\nonumber
\\
&+\lim_{n\to -\infty}\frac{1}{\frac{2\beta_{-1}(1+\lambda)(\sigma  +\lambda +\beta_{-1})(\sigma +\lambda  +\beta_0)}{R^2 k_x(\beta_{-1}-1)}+\frac{1}{\ddots +\frac{1}{ \frac{2\beta_n(1+\lambda)(\sigma  +\lambda +\beta_n)(\sigma +\lambda  +\beta_0)^{(-1)^{n+1}}}{R^{1-(-1)^n}k_x(\beta_n-1)}}}}.\nonumber
\end{align}
The left-hand side of (\ref{xx}) is a constant with respect to $\sigma $ and $R$. Since $\sigma  (R)$ is a continuous function of $R>0$, The action  $R\to 0$ in (\ref{xx}) shows that
$$\lim_{R\to 0} \frac{\sigma  (R)+\lambda+\beta_0}{R^2} =0.$$

On the other hand, if $\sigma  (R)$ is uniformly bounded for $R>0$, then  $R\to \infty$ in (\ref{xx}) shows that the right-hand side approaches to infinity, while  the left-hand side of (\ref{xx}) remains constant. Thus the boundedness assumption of $\sigma  (R)$ is not true. This gives the validity of  (\ref{R0}).
\end{proof}
\section{Existence of secondary steady-state flows}

Upon the observation of the spectral problem in the previous section, we have   the function $\sigma (R)$ for $R>0$ or its inverse $R(\sigma )$  for  $\sigma   >-\lambda-\beta_0$. This gives the existence of the critical Reynolds number $R_c= R(\sigma =0)$ , which also depends on $k_x$ and $j=1,...,N-1$. Thus it is expected to have the existence of steady-state solutions bifurcating from $\psi_0$ as $R$ varies across $R_c$. However the eigenfunction space spanned  by the two orthogonal real eigenfunctions
\bbe
\psi_1&=&\R \psi =\sum_{n\in Z} \phi_n \cos ( k_x x+\frac{n\pi}2)\sin(n+\frac j{2N})y, \label{nnn10}
\\
\psi_2&=&\I \psi =\sum_{n\in Z} \phi_n \sin ( k_x x+\frac{n\pi}2)\sin(n+\frac j{2N})y. \label{nnn101}
\bee
Actually, for any  flow invariant space of (\ref{new2})-(\ref{bd2}) containing  one of the  eigenfunctions above, it must contains the another one as well. Thus we cannot use steady-state bifurcation theorems, as they are not applicable to  even-dimensional eigenfunction space problem. Recently, Ma and Wang \cite{N29_,N29} use central manifold theorem to reduce a partial differential equation to  an ordinary differential equation with respect to the center manifold to find topological structure transition around the bifurcation point.  In the present paper, we will follow this argument to show the bifurcation into a circle of steady-state solutions as the Reynolds varies across the critical value $R_c$.

For  the function spaces  $L_2$ space
under the norm
\begin{align*} \|\psi\|_{L_2} =\left(\int^{\pi/k_x}_{-\pi/k_x} \int^{2N\pi}_0 |\psi|^2dxdy\right)^{1/2},
\end{align*}
we consider the solution in  the Sobolev space
\begin{align*}
H^4 = \left\{ \psi \in L_2; \,\,  \|\psi\|_{H^4}=\|\Delta^{2}\psi\|_{L_2}<\infty, \,\, \psi \mbox{ satisfies the conditions (\ref{bd1})-(\ref{bd2})}\right\}.
\end{align*}

By Lemma \ref{L3}, the spectral solution  $(\sigma , \psi)$ of the spectral problem \eqref{n1b} and \eqref{nnn5} is uniquely defined by the Reynolds number $R$  for given  $k_x$ and $j$.
Thus the critical Reynolds number $R_c=R_{c,k_x,j}$  is uniquely defined. However, we cannot prove that
\bbe \label{R}R_{c,k_x,j}\ne R_{c,k_x',j'} \,\,\mbox{ whenever } (k_x,j)\ne (k_x',j'),
\bee
although (\ref{R}) always true by numerical simulation.

To  use the centre manifold theorem for $R$ close to the critical value $R_c$,
%we define  the linear operator \be A= \frac1R(-\lambda \Delta + \Delta^2) +L\ee and
we define the nonlinear operator
\begin{align*}  N(f,g ) =\p_x f \,\p_y\Delta g -\p_yf\,\p_x\Delta g
\end{align*}
and assume  $\psi$  the eigenfunction (\ref{nnn5}) in the remaining of this section.  For convenience of notation in the present section, we  let  $\varphi$ be  the unknown stream function of the Navier-Stokes equation (\ref{n1a}) under the boundary condition (\ref{bd1})-(\ref{bd2}). Thus $\varphi$ solves the  dynamical euqation
\bbe \p_t \Delta \varphi = L\varphi+ N(\varphi,\varphi),\,\,\, \varphi \in H^4.\label{equ}
\bee
Recall the  conjugate eigenfunction $\psi^*$ in (\ref{nnn7}).  We  use the real conjugate eigenfunctions
\bbe
\psi_1^*&=&\R \psi^* =\sum_{n\in Z} \phi_n^* \cos ( k_x x+\frac{n\pi}2)\sin(n+\frac j{2N})y, \label{nnn10}
\\
\psi_2^*&=&\I \psi^* =\sum_{n\in Z} \phi_n^* \sin ( k_x x+\frac{n\pi}2)\sin(n+\frac j{2N})y \label{nnn101}
\bee
with respect respectively to $\psi_1$ and $\psi_2$.

Define the central space  and the stable space
$$E_c = \left\{ s_1 \psi_1+s_2\psi_2|\,\,  (s_1,s_2)\in R^2\right\},$$
$$E_s =\left\{ \phi\in H^4 | \,\,\, \langle \phi,\psi_1^*\rangle =\langle \phi,\psi^*_2\rangle =0\right\}.$$
Employ  Lemma \ref{L2} to define the projection operator
$${\mathcal{P}}\phi = \phi -\frac{\langle \phi,\psi_1^*\rangle }{\langle \psi_1,\psi^*_1\rangle }\psi_1-\frac{\langle \phi,\psi_2^*\rangle }{\langle \psi_2,\psi^*_2\rangle }\psi_2,
$$
which  ensures  $E_s={\mathcal{P}}E_s$ and ${\mathcal{P}}$ maps  $L_2$ onto ${\mathcal{P}}L_2$.
 It follows   Lemma \ref{L3} that $\sigma  (R)$ is strictly monotone function of $R$ as $R$ increase across the critical Reynolds number $R_c$.
 By the assumption (\ref{R}),  $R_c=R_{c,k_x,j}$ is a unique critical Reynolds number for given $k_x$ and $j$.   Thus   by the Sobolev imbedding theorem and the  Fredholm alternative, the linear operator,  with  $\sigma $ in a vicinity of $\sigma =0$,
\begin{align*} L : E_s \mapsto {\mathcal{P}}L_2
\end{align*}
is a bijection and has the bounded inverse
\begin{align*} L^{-1} : {\mathcal{P}}L_2 \mapsto E_s.
\end{align*}
It is  also readily seen that the nonlinear operator $N: H^4\times H^4\mapsto L_2$ is  compact.

We  are in the position to state the main result of the present paper:
\begin{theorem}\label{th}
Let $N>1$ and let  $k_x$ and $j=1,...,N-1$ satisfy   the condition
\bbe \label{be1}\beta_0-1<0,\,\,\, \mbox{ }\,\,\, \beta_{-1}-1>0
\bee
by recalling $\beta_n=k_x^2 +(n+\frac j{2N})^2$. In addition to the condition (\ref{R}), assume that
\begin{align*}
& \langle  N( \bar\psi, L^{-1}N(\psi,\psi)),\psi^*\rangle +\langle  N(  \psi,  L^{-1}N(\psi,\bar\psi)+L^{-1}N(\bar\psi,\psi)  ),\psi^*\rangle
 \\
  +& \langle  N(  L^{-1}N(\psi,\psi),\bar\psi),\psi^*\rangle +\langle  N(  L^{-1}N(\psi,\bar\psi)+L^{-1}N(\bar\psi,\psi) ,\psi ),\psi^*\rangle \neq 0.
\end{align*}\label{T1}
Then (\ref{new2})  in $H^4$ admits a circle of  steady-state solutions  branching off  $\psi_0$ as $R$ varies across the critical Reynolds number  $R_c= R_{c,k_x,j}$.
\end{theorem}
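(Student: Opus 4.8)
The plan is to reduce the infinite–dimensional dynamical system \eqref{equ} to a two–dimensional ordinary differential equation on the centre manifold, exploiting the structure provided by Lemmas \ref{L1}–\ref{L3}. First I would write $\varphi = s_1\psi_1 + s_2\psi_2 + \Phi$, where $(s_1,s_2)\in\RR^2$ parametrise the centre space $E_c$ and $\Phi\in E_s$ lies on the stable manifold. Applying the projection ${\mathcal P}$ to \eqref{equ} and using $L^{-1}:{\mathcal P}L_2\to E_s$ together with the compactness of $N$, the standard centre manifold theorem gives $\Phi = \Phi(s_1,s_2,R)$ with $\Phi = O(s_1^2+s_2^2)$; to the order needed, $\Phi$ is obtained by solving the linear equation $L\Phi = -{\mathcal P}N(s_1\psi_1+s_2\psi_2, s_1\psi_1+s_2\psi_2)$, i.e. $\Phi = -L^{-1}{\mathcal P}N(\cdot,\cdot)$ evaluated on the quadratic part, plus higher–order corrections. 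Substituting this back and pairing with $\psi_1^*$ and $\psi_2^*$ yields the reduced system $\dot s_i = g_i(s_1,s_2,R)$.

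The key simplification is that $N(\psi_1,\psi_1)$, $N(\psi_1,\psi_2)+N(\psi_2,\psi_1)$, $N(\psi_2,\psi_2)$ involve only the two real trace harmonics $\cos/\sin(k_xx+\cdot)$ built from $\psi$, so it is cleaner to work with the complex eigenfunction $\psi = \psi_1 + \i\psi_2$ and the complex amplitude $z = s_1 + \i s_2$, writing $\varphi = \R(z\psi) + \Phi$. Then the bilinear terms reorganise as $N(\R(z\psi),\R(z\psi)) = \tfrac14\bigl(z^2 N(\psi,\psi) + |z|^2(N(\psi,\bar\psi)+N(\bar\psi,\psi)) + \bar z^2 N(\bar\psi,\bar\psi)\bigr)$. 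Because the $x$–dependence is through $\e^{\i k_x x}$, the quadratic self–interaction $N(\psi,\psi)$ has wavenumber $2k_x$ and the cross term $N(\psi,\bar\psi)+N(\bar\psi,\psi)$ has wavenumber $0$; neither is resonant with the critical mode (which has wavenumber $k_x$), so the projection of the quadratic terms onto $E_c$ vanishes and $L^{-1}$ is legitimately applied to them in $E_s$. The lowest–order resonant contribution therefore appears at cubic order, from $N$ of a first–order term with a second–order (quadratic in $z$) term; collecting these gives precisely the four pairings in the hypothesis of the theorem. Consequently the reduced equation takes the normal form $\dot z = \bigl(\tfrac{\sigma(R)}{R}\bigr) z + c\,|z|^2 z + o(|z|^3)$ where $c$ is (a nonzero multiple of) the quantity assumed nonzero in the statement, divided by $\langle\psi,\psi^*\rangle$; rotational invariance of the problem in the $x$–direction forces the cubic term to have this equivariant form, with no $z^3$ or $\bar z^3$ term surviving.

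With the normal form in hand the conclusion is routine: by Lemma \ref{L3}, $\sigma(R)$ is strictly increasing and changes sign at $R_c$, so $\mathrm{Re}\bigl(\sigma(R)/R\bigr)$ passes through zero transversally; the equivariant pitchfork (for $\dot z = \mu z + c|z|^2 z$ with $\mu = \mu(R)$ real, $c\ne 0$ real) then produces, for $R$ on the appropriate side of $R_c$ determined by $\mathrm{sgn}(c)$, a circle of equilibria $|z| = \sqrt{-\mu/c}$. Pulling back through $\varphi = \R(z\psi) + \Phi(z,R)$ and adding $\psi_0$ gives a circle of genuine steady–state solutions of \eqref{new2} in $H^4$ bifurcating from $\psi_0$ as $R$ crosses $R_c$. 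Lemma \ref{L2} enters to guarantee $\langle\psi_i,\psi_i^*\rangle\ne 0$ so that ${\mathcal P}$, $E_c$, $E_s$ are well defined, and assumption \eqref{R} guarantees that $R_c=R_{c,k_x,j}$ is a simple (two–dimensional) critical value so that no other modes interfere with the reduction.

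The main obstacle I expect is verifying rigorously that the genuine reduced dynamics agrees with this cubic normal form up to higher order — that is, (i) justifying the centre manifold reduction in the correct function spaces (the operator $L$ is not sectorial in the usual parabolic sense because of the $\Delta$ acting on $\partial_t\varphi$ on the left of \eqref{equ}, so one must either invert $\Delta$ first or invoke the version of the centre manifold theorem suited to this "generalised" evolution setting as in Ma–Wang), and (ii) controlling the quadratic interaction: one must confirm that $L^{-1}$ really is bounded on the wavenumber-$0$ and wavenumber-$2k_x$ sectors at $\sigma=0$, i.e. that $L$ has no kernel there, which is exactly where the spectral information of Lemma \ref{L1} (availability of $\beta_n$, the continued fraction, and the sign conditions \eqref{be1}) must be invoked. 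The clean sign arguments of Lemma \ref{L2} and Lemma \ref{L3} handle the linear part and the denominators $\langle\psi_i,\psi_i^*\rangle$, so the remaining work is bookkeeping of the four cubic pairings and checking they combine into the single expression in \eqref{be1}'s companion hypothesis — but the analytic core is the resolvent bound making $L^{-1}N(\psi,\psi)$, $L^{-1}N(\psi,\bar\psi)$, etc., well defined elements of $H^4$.
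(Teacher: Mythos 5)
Your proposal follows essentially the same route as the paper: the same centre-manifold reduction with $\varphi=s_1\psi_1+s_2\psi_2+\Phi$, the same quadratic solve $L\chi_{i,j}=-N(\psi_i,\psi_j)$ justified by the non-resonance of the wavenumber-$0$ and wavenumber-$2k_x$ sectors against the critical $k_x$ mode, the same complex-amplitude normal form $\dot{\x}=\mu\x-(a+b)\x|\x|^2$ with $a+b$ assembled from exactly the four cubic pairings in the hypothesis, and the same equivariant-pitchfork conclusion yielding the circle of equilibria. The technical points you flag (the Ma--Wang version of the centre manifold theorem for the $\partial_t\Delta$ evolution, and the bounded invertibility of $L$ on $E_s$ via the Fredholm alternative under assumption (\ref{R})) are precisely the ingredients the paper invokes.
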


\begin{proof}

By Lemma \ref{L2}, we may assume the normalization
\bbe \label{D}\langle \Delta\psi_i,\psi^*_j\rangle =\delta_{i,j},\,\,\,i,\,j=1,2
\bee
for $\delta_{i,j}$ the Kronecker delta function.
The  unknown stream function $\varphi$ is written as the orthogonal decomposition form
\begin{align*} \varphi= s_1 \psi_1+s_2\psi_2+\phi\,\,\,\mbox{ for }\,\,\, \phi \in E_s.
\end{align*}
This yields
\begin{align*} L\varphi = \mu  s_1 \Delta \psi_1 + \mu s_2 \Delta\psi_2 +L\phi\,\,\,\mbox{ for }\,\, \mu=\frac \sigma R.
\end{align*}
Hence, with the  use of (\ref{D}), we can rewrite (\ref{equ}) with $(\varphi, \mu)$ in a vicinity of $(0,0)$ into the following dynamical system
\bbe\label{mm}
\left\{
\begin{aligned}\displaystyle\frac{ds_1}{dt}&=\mu s_1+ \langle N(s_1 \psi_1+s_2\psi_2+\phi, s_1 \psi_1+s_2\psi_2+\phi),\psi_1^*\rangle, \vspace{2mm}
\\
\displaystyle\frac{ds_2}{dt}&= \mu s_2+ \langle N(s_1 \psi_1+s_2\psi_2+\phi, s_1 \psi_1+s_2\psi_2+\phi),\psi_2^*\rangle, \vspace{2mm}
\\
\frac{d \Delta\phi}{dt}&= L\phi+ {\mathcal{P}}N(s_1 \psi_1+s_2\psi_2+\phi, s_1 \psi_1+s_2\psi_2+\phi),
\\
\frac{d\mu}{dt}&=0.
\end{aligned}
\right.
\bee
By the centre manifold theorem, there exists a center manifold function presented in the Taylor expansion
$$M= s_1^2\chi_ {1,1}+s_1s_2\chi_ {1,2}+s_1s_2\chi_ {2,1}+ s_2^2 \chi_ {2,2}+  s_1\mu\chi_ 1 +s_2\mu\chi_ 2+ \mu^2\chi_ 0  +o(|(s_1,s_2,\mu)|^2)$$
for $\chi_ {i,j}, \chi_ i \in E_s$.
The function $M$ is tangential to the centre space of the system (\ref{mm})
and satisfies $\phi=M$ in a small neighbourhood of $(s_1,s_2,\mu)=(0,0,0)$. That is, equation (\ref{mm}) becomes
\bbe \frac{ds_1}{dt}&=& \mu s_1+ \sum_{i=1}^2 s_i\langle N( \psi_i,M)\!+\!N(M,\psi_i),\psi^*_1\rangle \!+\!\langle N(M,M),\psi_1^*\rangle, \label{m1}
\\
\frac{ds_1}{dt}&=& \mu s_2+  \sum_{i=1}^2 s_i\langle N( \psi_i,M)\!+\!N(M,\psi_i),\psi^*_2\rangle \!+\!\langle N(M,M),\psi_2^*\rangle, \label{m2}
\\
 \frac{d \Delta M}{dt} &=& L M+ {\mathcal{P}}N(s_1 \psi_1+s_2\psi_2+M, s_1 \psi_1+s_2\psi_2+M),\label{m3}
\bee
where we have used the property
\bbe\langle N(\psi_i,\psi_j),\psi_l^*\rangle =0 \,\,\mbox{ or }\,\,\, {\mathcal{P}}N(\psi_i,\psi_j)=N(\psi_i,\psi_j),\label{PN}
\bee
due to  the definition of the eigenfunctions $\psi_i$ and the conjugate eigenfunctions $\psi_i^*$.

To find the dynamical behaviour of the system around $(s_1,s_2,\mu)=(0,0,0)$, it is crucial to determine the  functions $\chi_ {i,j}$ and $\chi_ i$ in the  principal part of $M$.

Indeed, by (\ref{m3}) and (\ref{PN}), we have
\begin{align}\label{nm}
\frac{d\Delta  M}{dt}&=
 L[ \sum_{i,j=1}^2s_is_j\chi_ {i,j}+ \chi_ 1 s_1\mu +\chi_ 2 s_2\mu + \chi_ 0 \mu^2+o(|(s_1,s_2,\mu)|^2]\nonumber
 \\
 &+
\sum_{i,j=1}^2N(\psi_i,\psi_j)s_is_j  +\sum_{i=1}^2s_i( {\mathcal{P}}N(\psi_i,M)+{\mathcal{P}}N(M,\psi_i)) .%o(|(s_1,s_2,\mu)|^2)
\end{align}

On the other hand, by (\ref{m1}), (\ref{m2}) and (\ref{PN}), we have
\begin{align}
\frac{d \Delta  M}{dt}&=\frac{\partial\Delta M}{\p s_1} \frac{ d s_1}{dt} +\frac{\partial \Delta M}{\p s_2} \frac{ d s_2}{dt}\label{nm1}
\\
&=\Delta(2s_1\chi_ {1,1}\!+\!s_2\chi_ {1,2}\!+\!s_2\chi_ {2,1} \!+\!\chi_ 1\mu \!+\! o(|(s_1,s_2,\mu)|^3)\nonumber
\\
&\times (\mu s_1 \!+\! \sum_{i=1}^2 s_i\langle N( \psi_i,M)\!+\!N(M,\psi_i),\psi^*_1\rangle \!+\!\langle N(M,M),\psi_1^*\rangle )\nonumber
\\
&\!+\!\Delta (2s_2\chi_ {2,2}\!+\!s_1\chi_ {1,2}\!+\!s_1\chi_ {2,1} \!+\!\chi_ 2\mu\!+\! o(|(s_1,s_2,\mu)|^3 )\nonumber
\\
&\times (\mu s_2 \!+\!  \sum_{i=1}^2 s_i\langle N( \psi_i,M)\!+\!N(M,\psi_i),\psi^*_2\rangle \!+\!\langle N(M,M),\psi_2^*\rangle ).\nonumber
\end{align}
Note  that (\ref{nm}) = (\ref{nm1}). We find
$$ L\chi_ {i,j}=-N(\psi_i,\psi_j), \,\,\,\chi_ i=0, \,\,\,\chi_ 0=0$$
and thus
$$M= -\sum_{i,j=1}^2 s_is_j L^{-1} N(\psi_i,\psi_j) +o(|(s_1,s_2,\mu)|^2).$$
Higher order terms of $M$ can also be obtained from the balance of the equation (\ref{nm})=(\ref{nm1}) and the centre manifold function can be further obtained as
 \bbe M = -\sum_{i,j=1}^2 s_is_j L^{-1} N(\psi_i,\psi_j) +O(|(s_1,s_2)|^3)+ O(|\mu|\,|(s_1,s_2)|^2).\label{hh}\bee

With the use of this expression, we can reduce (\ref{m1}) and (\ref{m2}) to an equation system, which is only dependent of $(s_1,s_2,\mu)$ in a small neighbourhood of $(s_1,s_2,\mu)=(0,0,0)$. It remains to simplify the nonlinear terms of (\ref{m1}) and (\ref{m2}) by using (\ref{hh}).

To do so, we use  the complex formulation
 $$\x=s_1+\i s_2,\,\,\, \psi=\psi_1+\i\psi_2,\,\,\, \psi^*=\psi_1^*+\i \psi^*_2$$
to obtain
\begin{align*}
N(\psi_1,\psi_1) &= \frac14 N(\psi+\bar\psi,\psi+\bar\psi) =  \frac14[N(\psi,\psi)+N(\bar\psi,\psi)+N(\psi,\bar\psi)+N(\bar\psi,\bar\psi)],
\\
N(\psi_1,\psi_2) &=-\i \frac14 N(\psi+\bar\psi,\psi-\bar\psi) = -\i \frac14[N(\psi,\psi)+N(\bar\psi,\psi)-N(\psi,\bar\psi)-N(\bar\psi,\bar\psi)],
\\
N(\psi_2,\psi_1) &=-\i \frac14 N(\psi-\bar\psi,\psi+\bar\psi) = -\i \frac14[N(\psi,\psi)-N(\bar\psi,\psi)+N(\psi,\bar\psi)-N(\bar\psi,\bar\psi)],
\\
N(\psi_2,\psi_2) &= -\frac14 N(\psi-\bar\psi,\psi-\bar\psi) =  \frac14[N(\psi,\psi)-N(\bar\psi,\psi)-N(\psi,\bar\psi)+N(\bar\psi,\bar\psi)]
\end{align*}
and hence
\begin{align*}
\lefteqn{4[ s_1^2N(\psi_1,\psi_1)+s_1s_2N(\psi_1,\psi_2)+s_1s_2N(\psi_2,\psi_1)+s_2^2N(\psi_2,\psi_2)]}
\\
&=s_1^2[N(\psi,\psi)+N(\bar\psi,\psi)+N(\psi,\bar\psi)+N(\bar\psi,\bar\psi)]-\i s_1s_2[N(\psi,\psi)+N(\bar\psi,\psi)-N(\psi,\bar\psi)-N(\bar\psi,\bar\psi)]
\\
&-\i s_1s_2[N(\psi,\psi)-N(\bar\psi,\psi)+N(\psi,\bar\psi)-N(\bar\psi,\bar\psi)]-s_2^2[N(\psi,\psi)-N(\bar\psi,\psi)-N(\psi,\bar\psi)+N(\bar\psi,\bar\psi)]
\\
&=(s_1^2-2\i s_1s_2-s_2^2)N(\psi,\psi)+(s_1^2+s_2^2)N(\bar \psi,\psi)+(s_1^2+s_2^2)N(\psi,\bar\psi)+(s_1^2+2\i s_1s_2-s_2^2)N(\bar\psi,\bar\psi)
\\
&=\bar\x^2N(\psi,\psi)+|\x|^2 N(\bar \psi,\psi)+|\x|^2N(\psi,\bar\psi)+\x^2N(\bar\psi,\bar\psi).
\end{align*}

Since   $L$ is unidirectional operator applying along  in the $y$-direction and the  eigenfunction  is in the form
$$\psi=\e^{\i k_xx} \sum\phi_n  \i^n \sin(n+\frac j{2N})y,$$
there exist  functions $f_i$ for $i=1,...,4$  independent of $x$ such that
\begin{align}\sum_{i,j=1}^2s_is_j\chi_ {i,j} &= -\frac14 L^{-1}[\bar\x^2N(\psi,\psi)+|\x|^2 N(\bar \psi,\psi)+|\x|^2N(\psi,\bar\psi)+\x^2N(\bar\psi,\bar\psi)]\nonumber
\\
&= \bar \x^2 \e^{2\i k_xx} f_1(y) + |\x|^2 f_2(y) + |\x|^2 f_3(y) + \x^2 \e^{-2\i k_xx} f_4(y).\label{v}
\end{align}
This together with the elementary fact
$$\int^{\pi/k_x}_{-\pi/k_x} e^{ \i m k_xx}dx =0,\,\,\, \mbox{ whenever }\,\,\, m\neq 0$$
implies
\begin{align*}
\sum_{i,j=1}^2& s_is_j\langle s_1N( \psi_1,\chi_ {i,j})+s_2N( \psi_2,\chi_ {i,j}),\bar\psi^*\rangle
\\
=&\frac12\sum_{i,j=1}^2 s_is_j\langle s_1N( \psi+\bar\psi,\chi_ {i,j})-\i s_2N( \psi-\bar\psi,\chi_ {i,j}),\bar\psi^*\rangle
\\
=&\frac12\sum_{i,j=1}^2 s_is_j\langle \bar \x N( \psi,\chi_ {i,j})+\overline{\bar \x N( \psi,\chi_ {i,j})},\bar\psi^*\rangle
\\
=& \frac12\bar \x\langle  N( \psi,\bar \x^2 \e^{2\i k_xx} f_1 + |\x|^2 f_2 + |\x|^2 f_3 + \x^2 \e^{-2\i k_xx} f_4),\bar\psi^*\rangle
\\
&+  \frac12\x\langle  N( \bar \psi,\bar \x^2 \e^{-2\i k_xx} \bar f_1 + |\x|^2 \bar f_2 + |\x|^2 \bar f_3 + \x^2 \e^{2\i k_xx} \bar f_4),\bar\psi^*\rangle
\\
=& \frac12  \bar \x\langle  N( \bar\psi, \x^2 \e^{2\i k_xx} \bar f_4),\bar\psi^*\rangle +\x\langle  N( \bar \psi, |\x|^2 \bar f_2 + |\x|^2 \bar f_3 ),\bar\psi^*\rangle
\\
=& \frac12  \x|\x|^2[ \langle  N( \psi, \e^{-2\i k_xx}  f_4),\bar\psi^*\rangle +\langle  N( \bar \psi,  \bar f_2 +  \bar f_3 ),\bar\psi^*\rangle ].
\end{align*}
That is, by (\ref{v}),
\begin{align}
\sum_{i,j=1}^2& s_is_j(s_1N( \psi_1,\chi_ {i,j})+s_2N( \psi_2,\chi_ {i,j}),\bar\psi^*)= -a \x|\x|^2 \label{a}
\end{align}
with
\begin{align*}
&a=   \frac18 ( \langle  N( \psi, L^{-1}N(\bar\psi,\bar\psi)),\bar\psi^*\rangle +\langle  N( \bar \psi,  L^{-1}N(\psi,\bar\psi)+L^{-1}N(\bar\psi,\psi)  ),\bar\psi^*\rangle).
\end{align*}
Similarly, we have
\begin{align}
\sum_{i,j=1}^2& s_is_j\langle s_1N(\chi_ {i,j},\psi_1)+s_2N( \chi_ {i,j},\psi_2),\bar\psi^*\rangle = -b \x|\x|^2\label{b}
\end{align}
with
\begin{align*}
&b=    \frac18(\langle  N(  L^{-1}N(\bar\psi,\bar\psi),\psi),\bar\psi^*\rangle +\langle  N(   L^{-1}N(\psi,\bar\psi)+L^{-1}N(\bar\psi,\psi) ,\bar\psi ),\bar\psi^*\rangle).
\end{align*}

With the use of (\ref{a}) and (\ref{b}) for  reduction of the nonlinear term of the $s_1$ and $s_2$ equations, we can now multiply (\ref{m2}) by the imaginary unit $\i$ and add the resultant equation to (\ref{m1}) to obtain
\begin{align*} \frac{d\x}{dt}=& \mu \x+ \sum_{i=1}^2 s_i\langle N( \psi_i,M)\!+\!N(M,\psi_i),\psi^*_1\rangle \!+\!\langle N(M,M),\bar\psi^*\rangle \label{m1m}
\\
=& \mu \x+ \sum_{i=1}^2 s_is_j\langle s_1(N( \psi_1,\chi_ {i,j})+s_2(N( \psi_2,\chi_ {i,j})+s_1N(\chi_ {i,j},\psi_1),\bar\psi^*\rangle
\\&+\sum_{i=1}^2 s_is_j\langle s_2N(\chi_ {i,j},\psi_2),\bar\psi^*\rangle+O(|\x|^4)+ O(|\mu| |\x|^3)
\\
=&\mu \x-(a+b)\x|\x|^2+O(|\x|^4)+ O(|\mu| |\x|^3).
\end{align*}

We thus have the supercritical bifurcation into a circle of solutions
$$\frac{\mu}{a+b}=|\x|^2+O(|\x|^3)  \,\,\mbox{ whenever }\,\,\, a+b >0$$
the subcritical bifurcation into a circle of solutions
$$\frac{\mu}{a+b}=|\x|^2+O(|\x|^3)  \,\,\mbox{ whenever }\,\,\, a+b < 0.$$
\end{proof}

This confirms the supercritical bifurcation into a circle of solutions for a simple spectral truncation of (\ref{equ}) (see Chen and Price \cite{N21}) as $a+b$ is always positive  by numerical computations.

\section{Numerical results}%\end{document}
As shown in Theorem  \ref{T1}, there exists a circle of steady-state solutions branching off the basic flow $\psi_0$ as $R$ varies across the critical Reynolds number $R_c=R_{c,k_x,j}$ satisfying (\ref{be}).
 The multiple solution bifurcation is  from the symmetry of the Navier-Stokes equation with respect to $x$. Therefore for any $\theta \in [0,2\pi)$,  a steady-state solution is bifurcating from $\psi_0$ in the direction of the eigenfunction
$$\psi_\theta= \sum_{n\in Z} \phi_n \i^n\cos(k_xx+\frac{n\pi}2+\theta)\sin (n+\frac{j}{2N})y,$$
which is a linear combination of the eigenfunctions $\psi_1$ and $\psi_2$ defined by the following modes
\bbe \label{mode}\cos(k_xx+\frac{n\pi}2)\sin (n+\frac{j}{2N})y\,\, \mbox{ and } \,\,\sin(k_xx+\frac{n\pi}2)\sin (n+\frac{j}{2N})y,\,\,\, n\in Z.
\bee
The  spectral truncation scheme \cite{N21} involving   the eigenfunction modes (\ref{mode}) and    forcing mode or the basic flow mode   $\sin y$ gives the first order approximation of the bifurcating solutions, since in the spirit of the bifurcation technique of Rabinowitz \cite{N27}  the  bifurcating solution of (\ref{new2})-(\ref{bd2})  is in the form
$$\psi=\psi_0+\varepsilon \psi_\theta + \varepsilon^2 \delta \psi,\,\,\,\, R= {R_c} + \varepsilon^2 \delta R
$$
for a function $\delta \psi$, a number $\delta R$ and a small parameter $\varepsilon$.
Integrating by parts, we see that  the solution
\begin{align*} \langle N(\psi,\psi),\Delta \psi\rangle = \langle N(\psi,\psi), \psi\rangle=0.
\end{align*}
Taking the inner product of (\ref{new2}) with $\Delta \psi +\psi$, we have
\bbe\label{psi} \langle \p_t \Delta \psi, \Delta\psi +\psi\rangle = \langle \frac{1}R(\Delta^2 +\lambda \Delta)\psi, \Delta \psi+\psi\rangle.
\bee
By Fourier expansion, the solution may expressed as
\begin{align*} \psi= \sum_{n,m,l} a_{n,m,l} \e^{\i mk_x x} \sin (n+\frac{lj}{2N})y,\,\,\, \bar a_{n,m,l}=a_{n,-m,l}
\end{align*}
for the summation of $n,m\in Z$ and  $l$ in a suitable $l$ set.
Thus (\ref{psi}) can be rewritten as
\bbe\label{psi1} \lefteqn{\sum_{n,m,l} \frac{d |a_{n,m,l}|^2}{dt} \beta_{n,m,l}(\beta_{n,m,l}-1)}\nonumber
\\ &&= \frac2R\sum_{n,m,l}  |a_{n,m,l}|^2 (\beta_{n,m,l}+\lambda)\beta_{n,m,l}(\beta_{n,m,l}-1)
 \bee
  for $$\beta_{n,m,l}= (mk_x)^2+(n+\frac{lj}{2N})^2.$$
 This shows that the solution is essentially dominated by a couple of the items $a_{n,m,l} \e^{\i mk_x x} \sin (n+\frac{lj}{2N})y$ so that $\beta_{n,m,l}<1$.  In particular,  for a steady-state solution $\psi$, the right-hand side of (\ref{psi1})  equals zero. This gives
 \bbe\label{Bij}\lefteqn{\sum_{n,m,l, \beta_{n,m,l}<1}  |a_{n,m,l}|^2 (\beta_{n,m,l}+\lambda)\beta_{n,m,l}(1-\beta_{n,m,l})\nonumber}
 \\
 &&=\sum_{n,m,l,\beta_{n,m,l}>1}  |a_{n,m,l}|^2 (\beta_{n,m,l}+\lambda)\beta_{n,m,l}(\beta_{n,m,l}-1).
 \bee
 This is the nonlinear extension of  the identity
 \bbe
  (\sigma  +\lambda+\beta_0)\beta_0(1-\beta_0) |\phi_0|^2 = \sum_{n\neq 0} (\sigma  +\lambda+\beta_n)\beta_n(\beta_n-1)|\phi_n|^2
  \bee
  for the linear eigenfunction
  $$ \sum_{n\in Z} \phi_n\i^n  \e^{\i k_x x} \sin (n+\frac j{2N})y.$$
 Thus $a_{n,m,l}$ tends to zero rapidly as $|n|$ and $|m|$ increase.
  With this observation, we use a spectral truncation scheme involving Fourier expansion modes with the selection of $-2\leq m\leq 2$, $n \in Z$ and suitable $l$'s.

  The numerical computation is related to the laboratory measurements given by Bondarenko {\it et al.} \cite{N11} with respect to the motion of  a thin layer of  the electrolytic fluid in a wall-bounded domain  in an electromagnetic field.
  In the laboratory experiment, critical  Reynolds number is around $2000$, $\lambda=20$  and the wave number $k_x =0.68\pm 0.05$. The   steady-state bifurcating solution  flow pattern  slightly over the critical Reynolds number $2000$ is displayed in \cite[Figures 4]{N11} (see also Figure \ref{f1}).
  \begin{figure}[h]
\centering \vspace{0mm}
\includegraphics[scale=0.35]{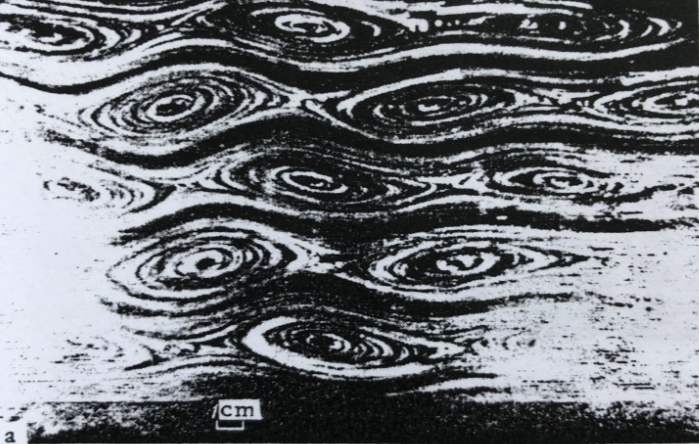}
\includegraphics[scale=0.52]{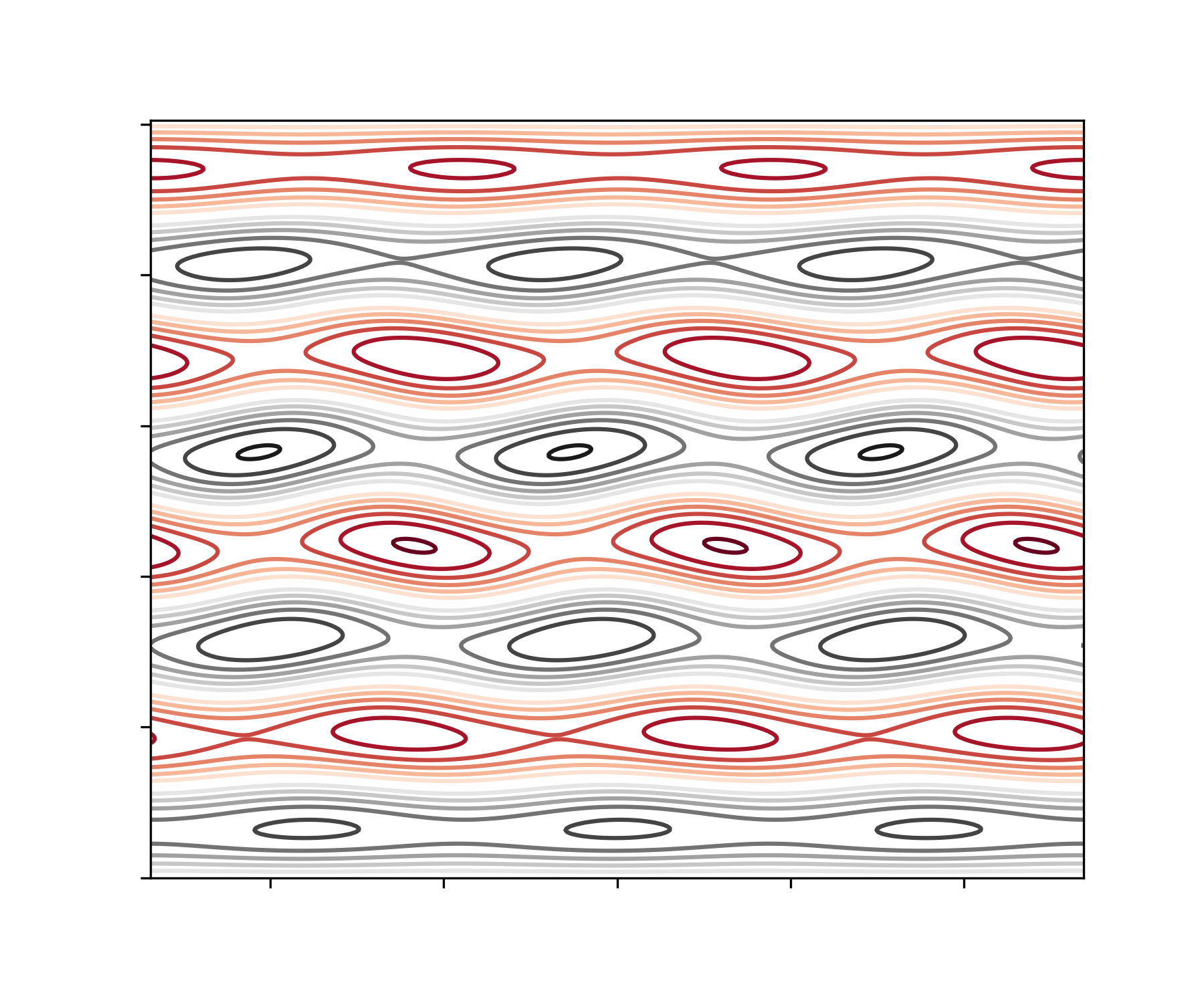}

\caption{Top: Experimental secondary steady-state flow, with the Reynolds number slightly over the critical experimental  Reynolds number 2000,  given by Bandarenko \cite[Figure 4]{N11} for $\lambda =20$ $k_x=0.68\pm 0.05$. Bottom: The secondary flow derived in Theorem \ref{th} for $\lambda=20$,  $k_x=0.7$, $N=4$,  $j=1$, $-3\pi/k_x\leq x \leq 3\pi/k_x$, $0 \leq y\leq 2N\pi$ and
$R=1810$, which is over the analytic critical Reynolds number $R_c=1760$.}\label{f1}
\end{figure}
\begin{figure}[h]
\centering \vspace{0mm}
\includegraphics[scale=0.45]{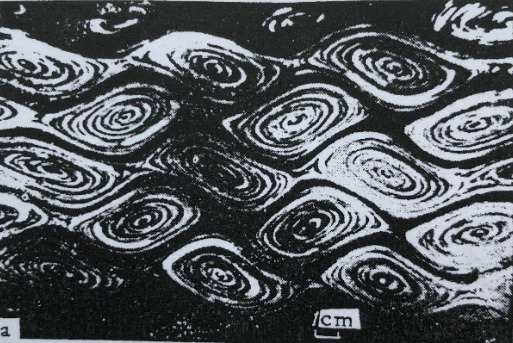}
\includegraphics[scale=0.52]{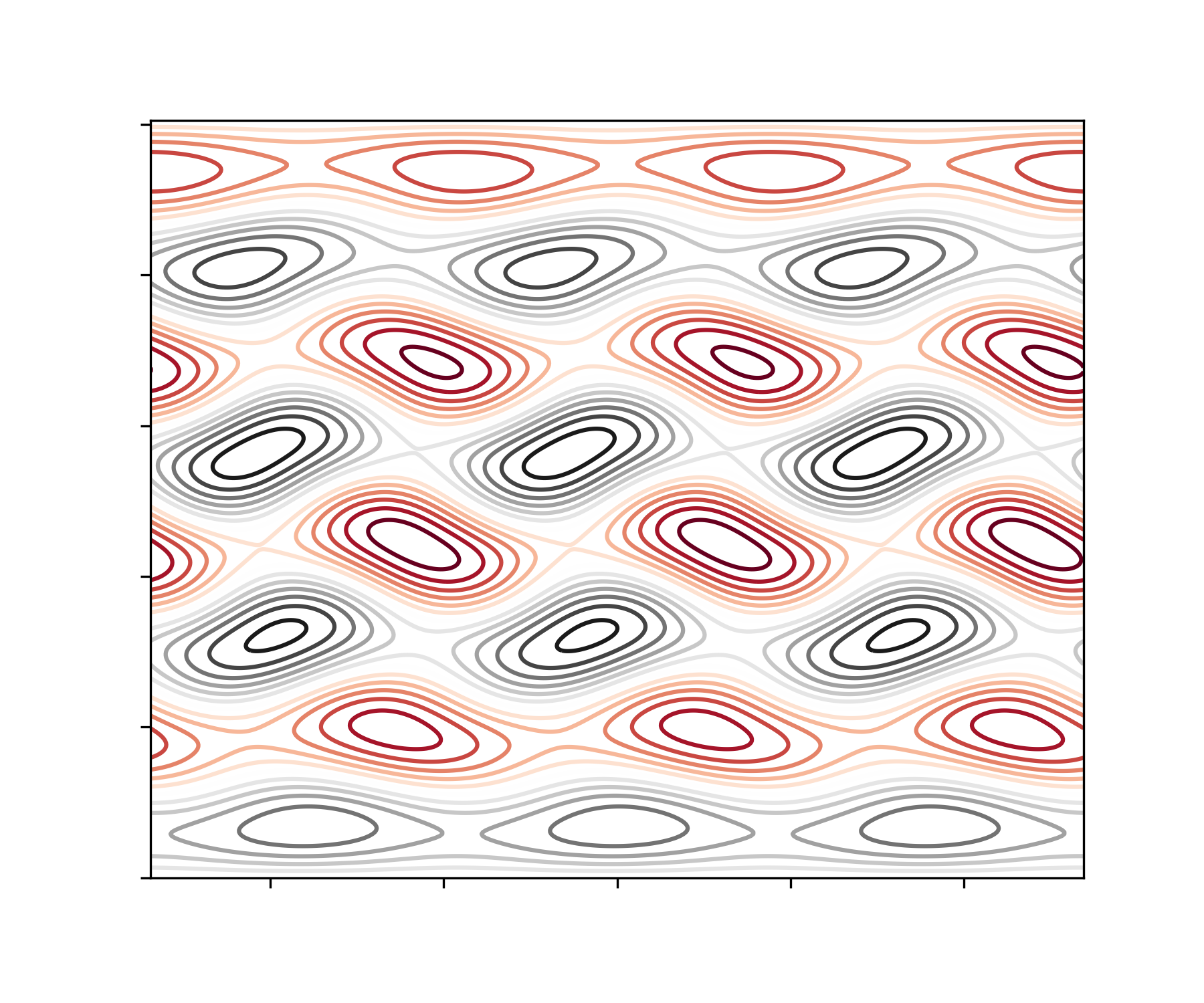}
\caption{ Top:  Experimental  flow pattern \cite[Figure 5]{N11} at $R=1.25\times 2000$, well above the experimental critical value $2000$. Bottom: The secondary flow derived in Theorem \ref{th} for $\lambda=20$,  $k_x=0.7$, $N=4$,  $j=1$, $-3\pi/k_x\leq x \leq 3\pi/k_x$, $0 \leq y\leq 2N\pi$ and
$R=2300$, which is well above  the analytic critical Reynolds number $R_c=1760$.}\label{f2}
\end{figure}
It has been given in \cite{N21} that it is suitable to take $N=4$ and $j=1$ to get numerical flow comparable with experimental secondary flow in Figures \ref{f1} and \ref{f2}. When $N=4$ and $j=1$, the first  critical  Reynolds number for the (\ref{new2}) and (\ref{bd1}) in the wall-bounded domain  is about 1768 (see \cite{N21}), which is reached at the wave number $k_x=0.63$. Now  we take $k_x=0.7$  due to $k_x=0.68\pm0.05$ in \cite{N11}. The secondary steady-state flow is dependent of  the phase number $\theta \in [0,2\pi)$ and is generated by the eigenfunction $\psi_\theta$ and the basic flow $\sin y$. Therefore by the phase transition
$x+\theta/k_x \to x$, the secondary flow  at $\theta$ becomes the secondary flow at $0$. Therefore flow patterns of the solutions are same after the phase transformation $x+\theta/k_x\to x$.
  The experimental and numerical results at the present spectral method  are displayed in Figures \ref{f1} and \ref{f2}, which show  respectively favorable agreement  with the experiment measurements  observed by  Bonderanko {\it et al.} \cite{N11}  for Reynolds number slightly over critical value (Figure \ref{f1})  and well above critical value (Figure \ref{f2}).  The analytic solution in Figure \ref{f2} is also in a good agrement with experiment measurement of Burgess \cite[Figure 1]{N12} or Figure \ref{ff3} for the secondary Kolmogorov flow pattern in a soap film  when $R$ is well above the critical Reynolds number of the laboratory experiment therein.
\begin{figure}[h]
\centering
\includegraphics[scale=0.45]{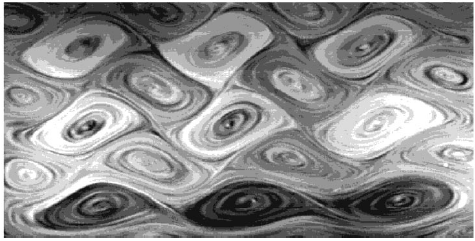}
\caption{Experimental measurement for a soap film image well above the primary instability given by Burgess {\it et al.} \cite[Figure 1]{N12}.}\label{ff3}
\end{figure}

Further numerical manipulations are  performed with respect to the increment of  high Reynolds number and different values of wave number $k_x$. Turbulence behaviours are observed for large values of $R$. In contraction to the laminar flow displayed in Figure \ref{f2} insensitive to initial data, the turbulence flow in high Reynolds number is very sensitive to the choice of initial data and time $t$. Flow patterns transited from the secondary steady states  become more and more complex as $R$ increases.  Figure 3 shows an example for four well developed flow patterns initially from  four different initial data when  $R= 40000$, $k_x=0.7$, $\lambda=20$, $N=4$ and $j=1$.

\begin{figure}[h]
\centering \vspace{0mm}
\includegraphics[scale=0.39]{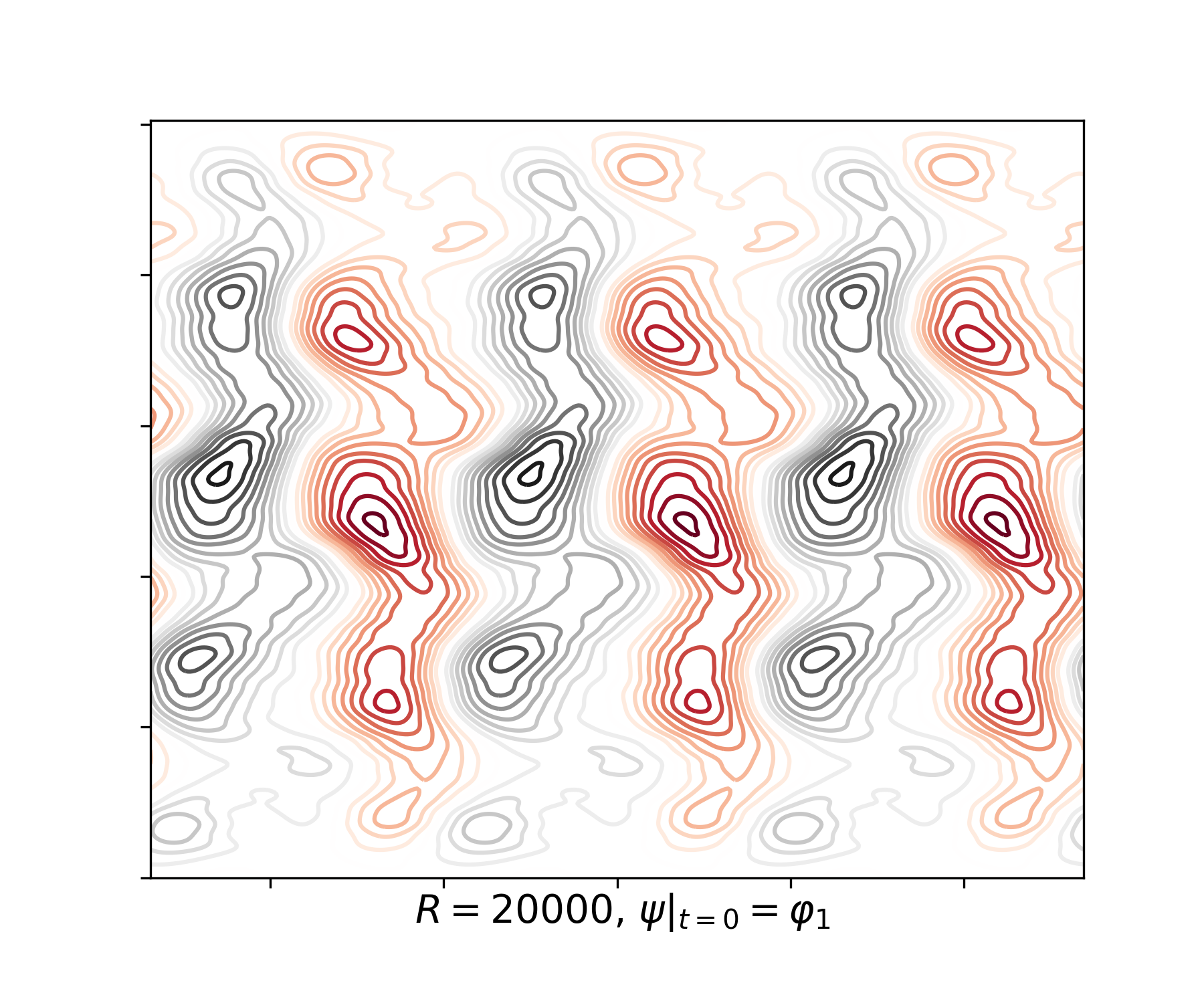}
\includegraphics[scale=0.39]{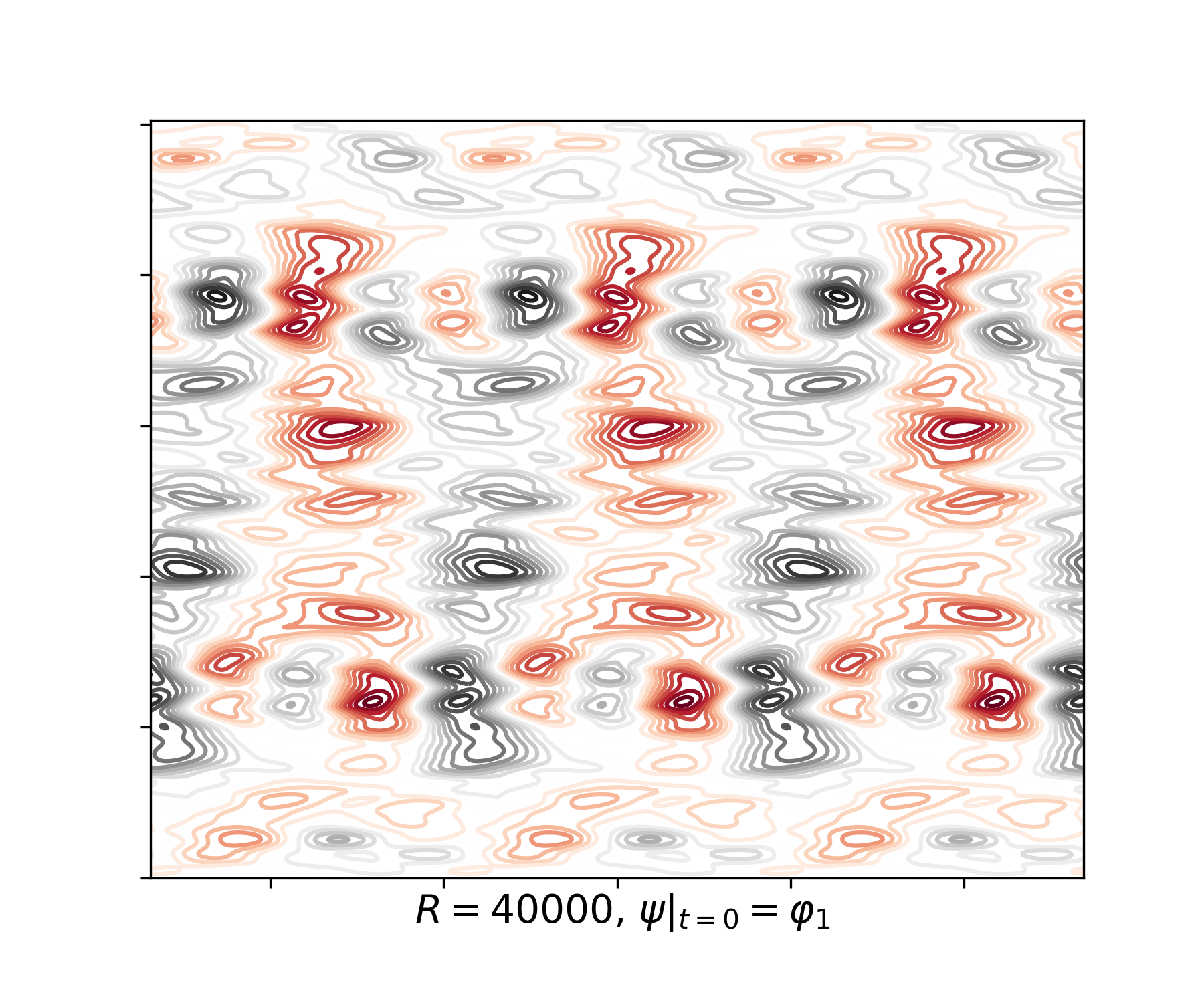}\vspace{-2mm}

\includegraphics[scale=0.39]{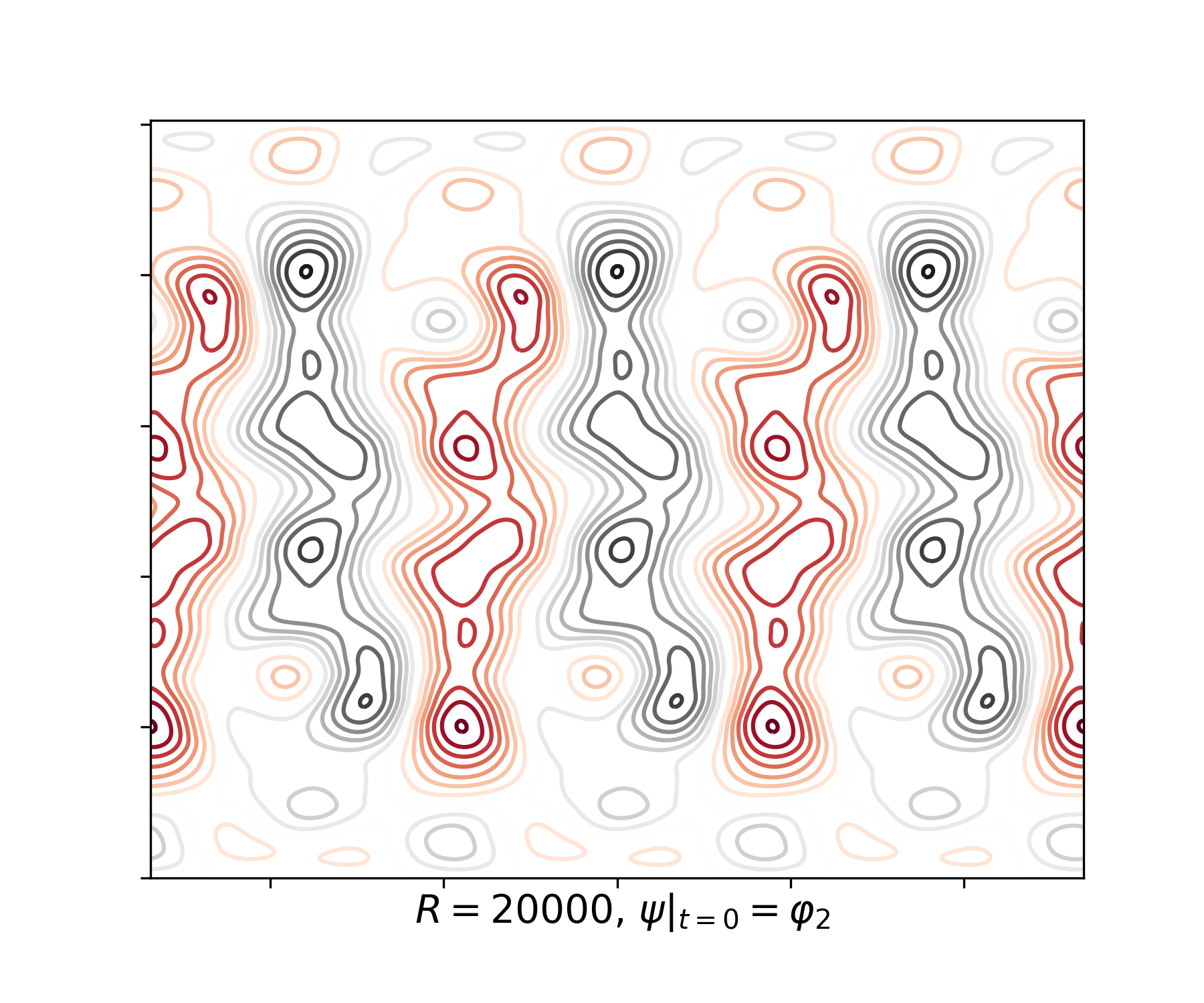}
\includegraphics[scale=0.39]{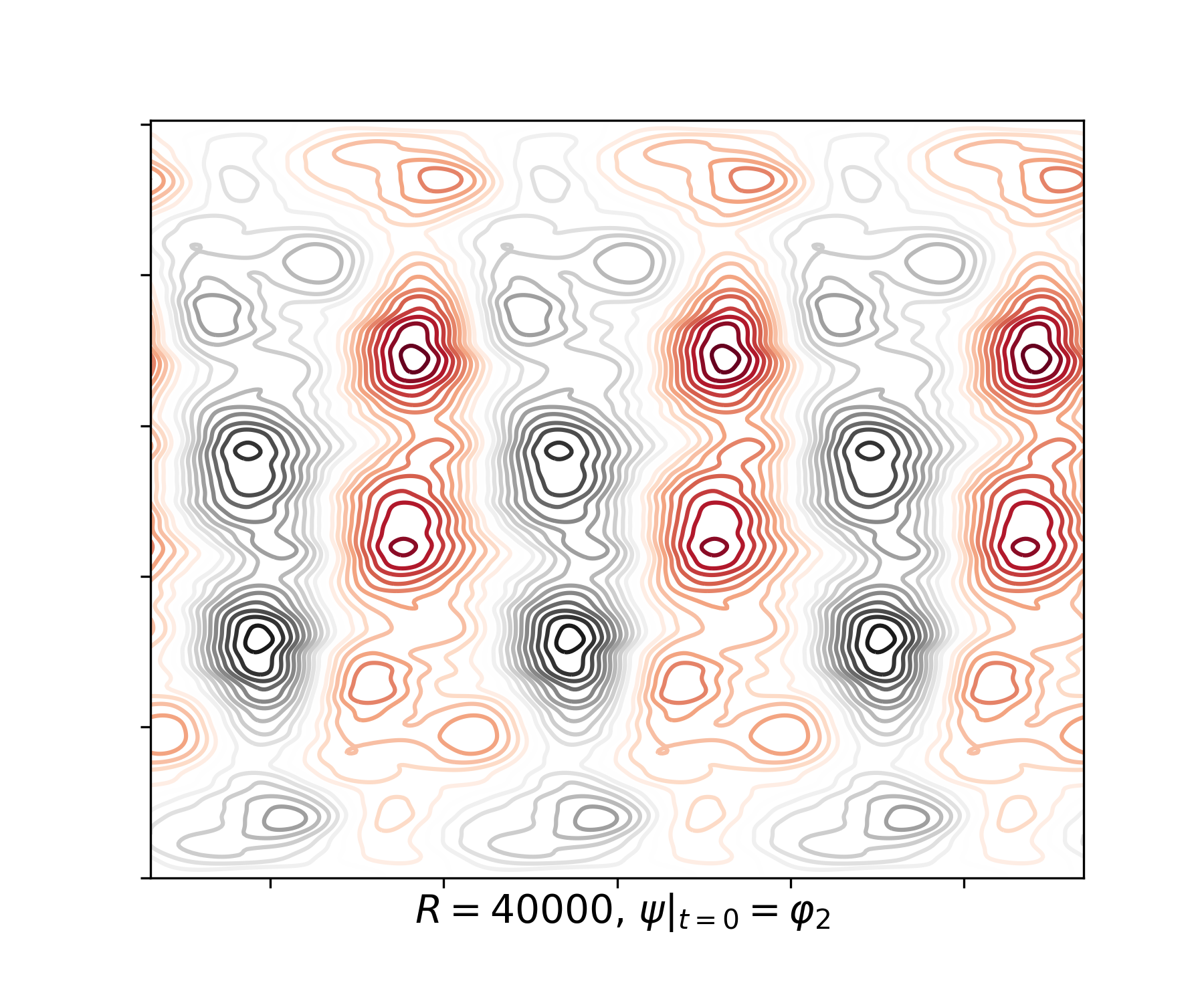}\vspace{-2mm}

\includegraphics[scale=0.39]{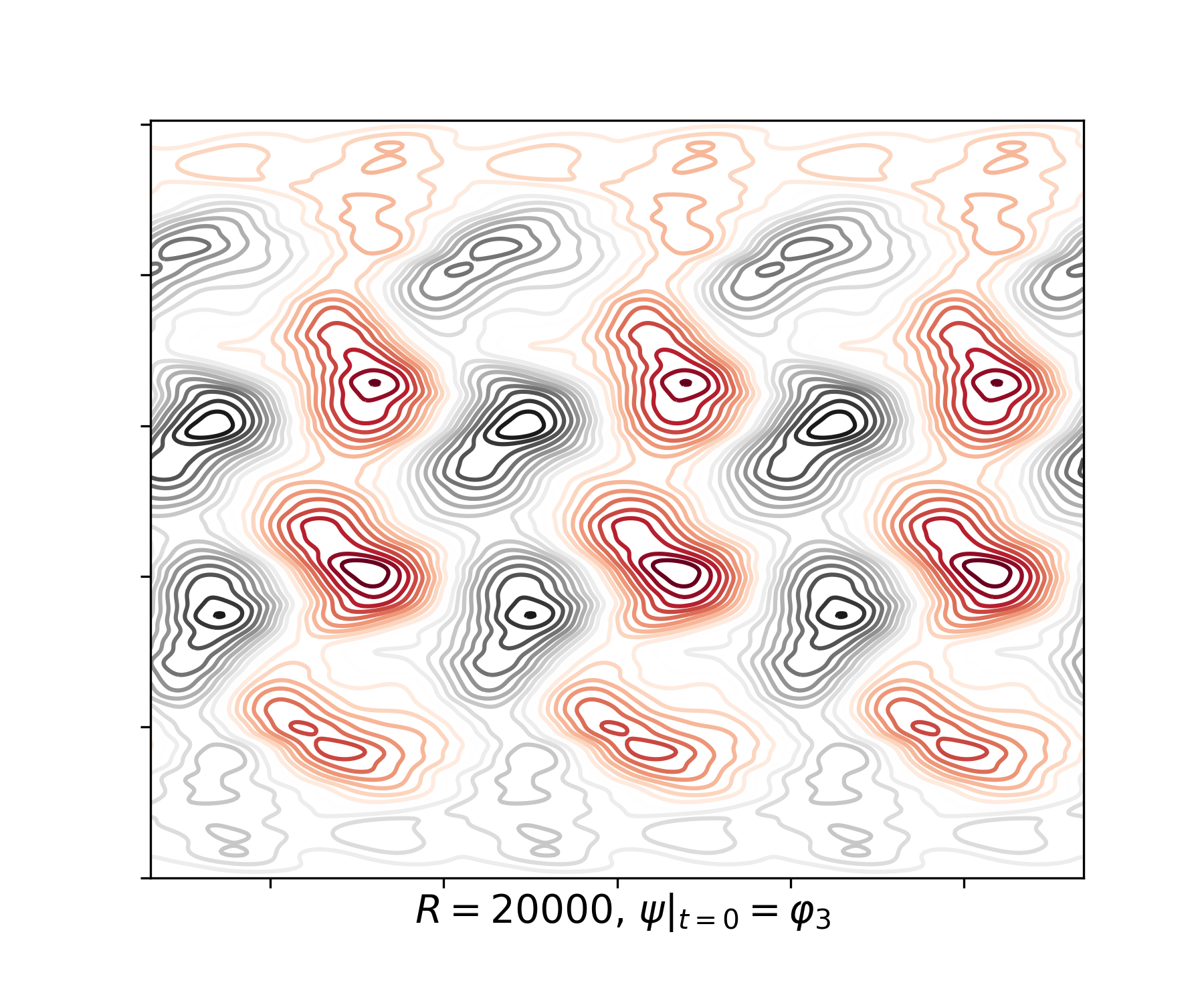}
\includegraphics[scale=0.39]{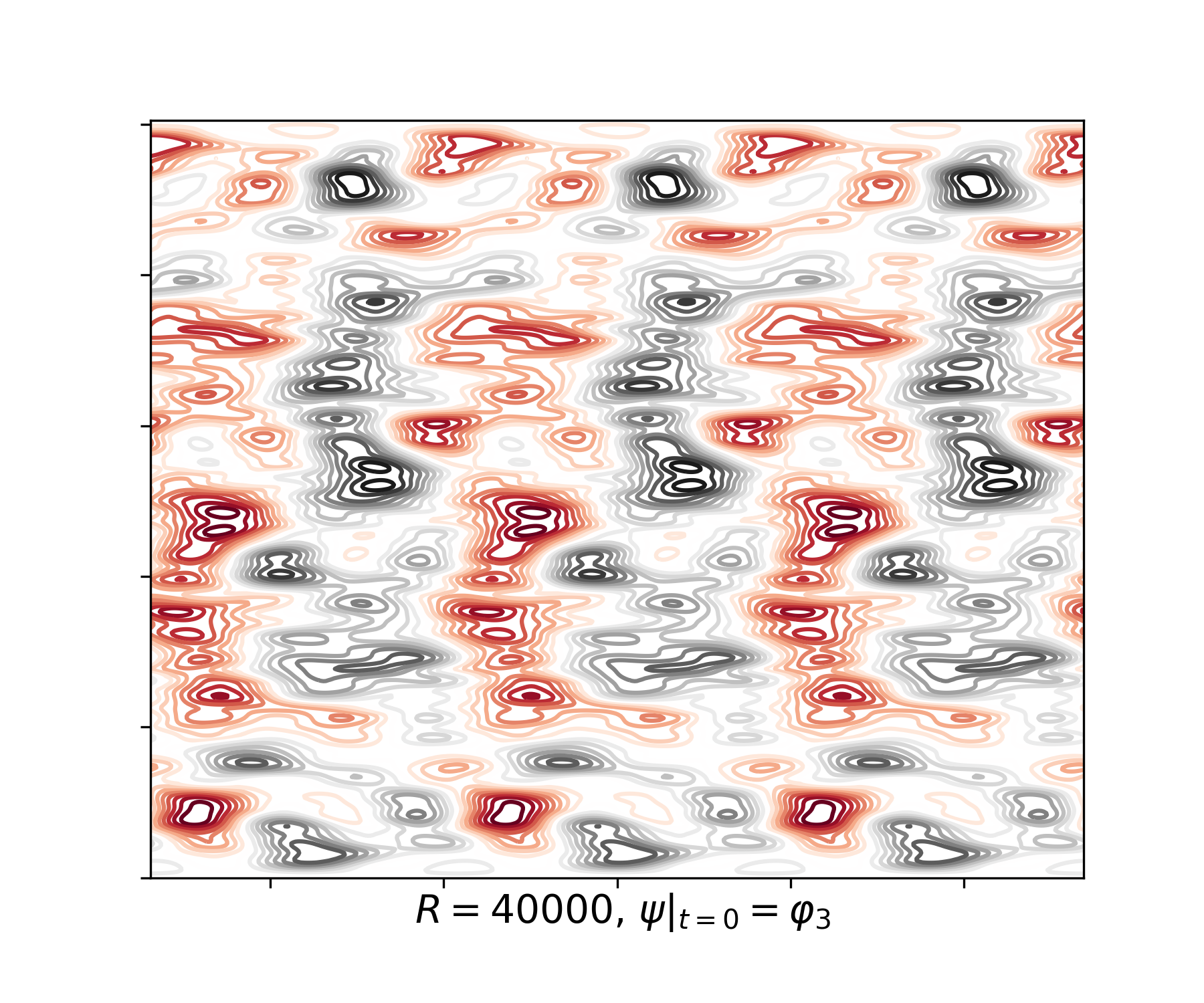}\vspace{-2mm}

\includegraphics[scale=0.39]{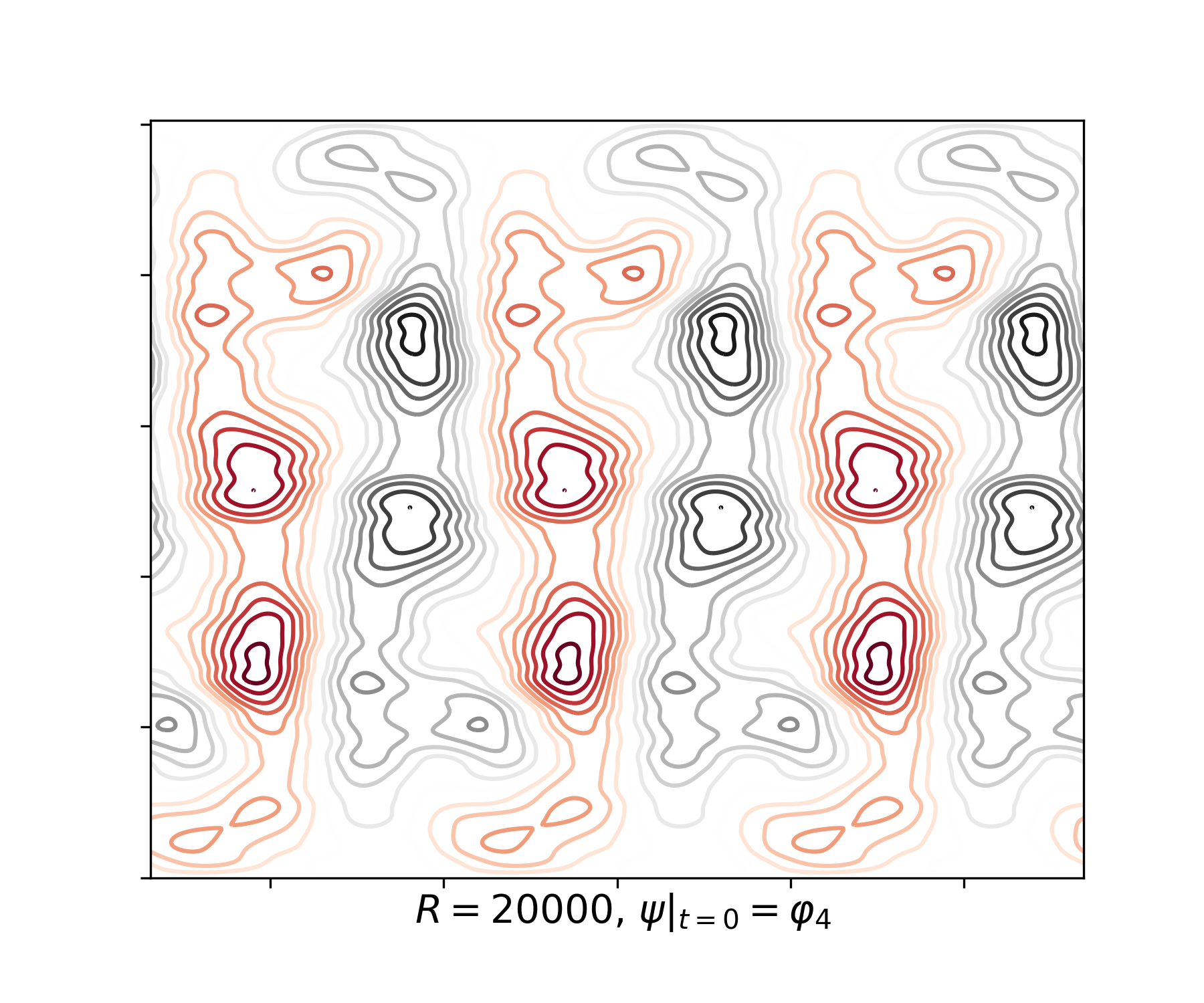}
\includegraphics[scale=0.39]{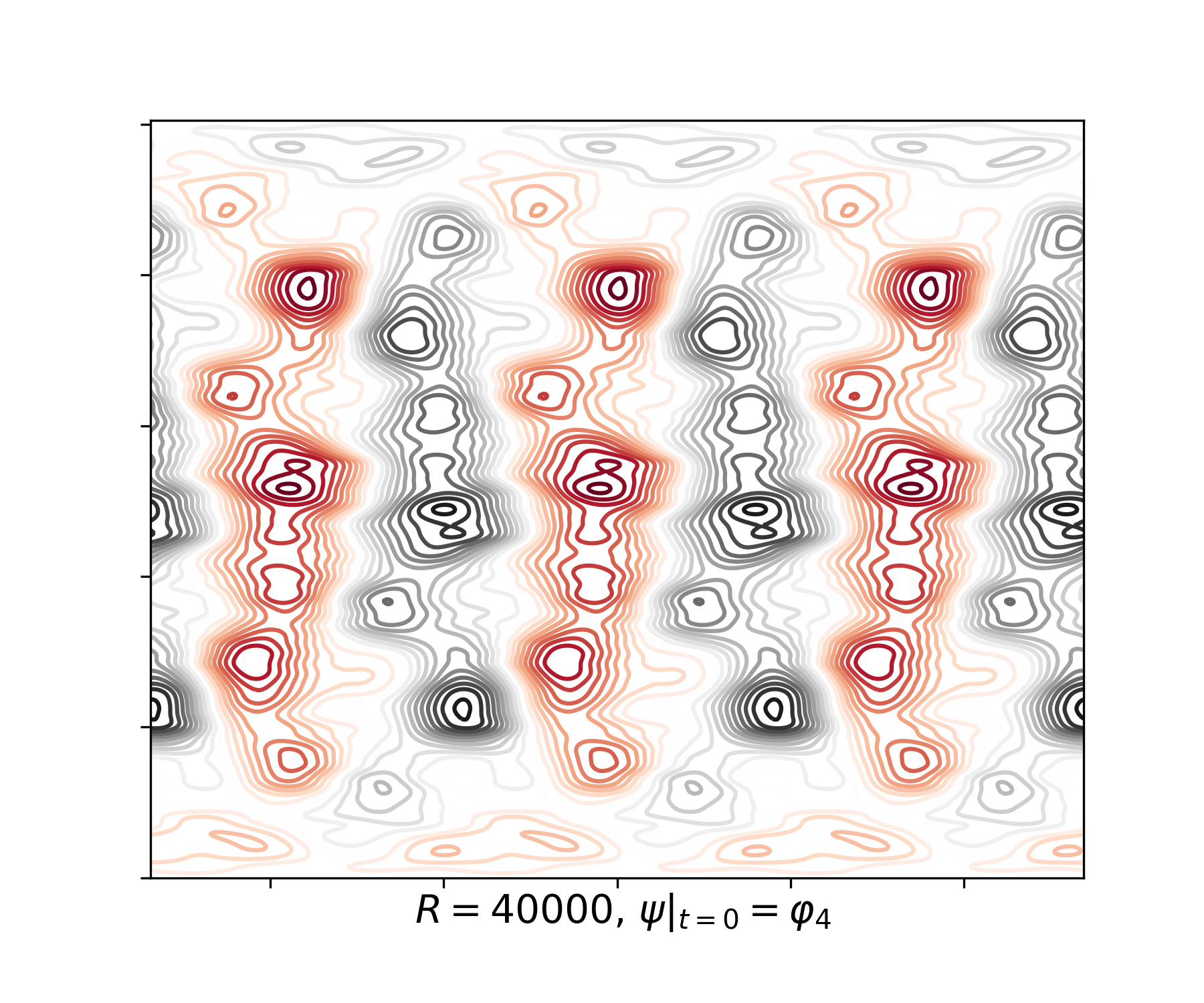}
\caption{Numerical flow patterns  of a solution $\psi$ to (\ref{new2})-(\ref{bd1})  developed  from some different  initial data $\varphi_1,...,\varphi_4$ at the  same time $t>>0$ and $R=20000, 40000$ ( $\lambda=20$, $N=4$, $j=1$, $-3\pi/k_x<x<3\pi/k_x$, $0<y<2N\pi$).\label{f3}
}
\end{figure}

\section{Conclusions}

In MHD laboratory experiments  performed by Kolesnikov \cite{N13,N14} and Bondarenko {\it et al.} \cite{N11}, an electrically conducting fluid flow  is  driven by the Lorenz force and controlled by Hartmann layer friction. This flow is governed by a two-dimensional equation  (see Bondarenko {\it et al.} \cite{N11})  and is bounded by the lateral walls of  an extended duct (see Thess \cite{N18}). The Kolmogorov flow is the basic steady-state solution of the MHD equation.

We prove rigorously that the MHD equation  admits  multiple  secondary  steady-state flows in relation to Bondarenko {\it et al.} \cite{N11}  and   confirm the finding of
Chen and Price \cite{N21} on the secondary flows defined by a simple spectral truncation scheme. The difficulty in the analysis is due to the absence of flow invariant subspace of the ducted flow  containing a single linear eigenfunction.

 The bifurcating solutions in the Fourier expansion satisfies (\ref{psi1}), which indicates   the  secondary flows being  dominated by a small number of Fourier modes. This also implies that the energy dissipation of the  MHD flow is mainly controlled  by  the Hartmann layer  friction effect.

 The theoretical secondary flow is in a good agreement with the experimental secondary flow observed by Bondarenko {\it et al.} \cite{N11} for $R=O(10^3)$. Numerical simulation is performed for further transition of the secondary flow. When $R=O(10^4)$, it is transited to well developed turbulence.

\enlargethispage{20pt}

\

\

\noindent \textbf{Acknowledgement.} This work was partially  supported by NSFC of China (11571240).

\end{document}